\DeclareFontFamily{T1}{fve}{}
\DeclareFontShape{T1}{fve}{m}{sc}{<->sub*fve/bx/n}{}
\numberwithin{equation}{section}
\newtheorem{theorem}{Theorem}[section]
\newtheorem{lemma}[theorem]{Lemma}
\newtheorem{corollary}[theorem]{Corollary}
\theoremstyle{definition}
\DeclareMathOperator{\sign}{sign}
\DeclareMathOperator{\ind}{\mathds{1}}
\newcommand{\sub}{\subseteq}
\newcommand{\R}{\mathds{R}}
\newcommand{\D}{\mathscr{D}}
\newcommand{\E}{\mathscr{E}}
\newcommand{\Ea}{\tilde{\E}}
\newcommand{\loc}{\mathrm{c}}
\newcommand{\jmp}{\mathrm{j}}
\newcommand{\kll}{\mathrm{k}}
\newcommand{\El}{\E^{\loc}}
\newcommand{\Eal}{\Ea^{\loc}}
\newcommand{\Eaj}{\Ea^{\jmp}}
\newcommand{\Eak}{\Ea^{\kll}}
\newcommand{\mul}{\mu^{\loc}}
\newcommand{\ph}{\varphi}
\newcommand{\eps}{\varepsilon}
\renewcommand{\le}{\leqslant}
\renewcommand{\ge}{\geqslant}
\newcommand{\pow}[1]{^{\langle #1\rangle}}
\newcommand{\abs}[1]{\lvert #1 \rvert}
\newcommand{\norm}[1]{\lVert #1 \rVert}
\newcommand{\ignore}[1]{}
\begin{document}

%\date{\today}
\title[Beurling--Deny formula for Sobolev--Bregman forms]{Beurling--Deny formula for Sobolev--Bregman forms}
\author[Michał Gutowski]{Michał Gutowski}
\author[Mateusz Kwaśnicki]{Mateusz Kwaśnicki}
\email{michal.gutowski@pwr.edu.pl, mateusz.kwasnicki@pwr.edu.pl}
\address{\normalfont Department of Pure Mathematics, Wroc\l{}aw University of Science and Technology, Wyb.\@ Wyspia\'nskiego 27, 50-370 Wroc\l{}aw, Poland.}
\thanks{M.~Gutowski was supported by the Polish National Science Centre (NCN) grant no.\@ 2018/31/B/ST1/03818. M.~Kwaśnicki was supported by the Polish National Science Centre (NCN) grant no.\@ 2019/33/B/ST1/03098}

\begin{abstract}
For an arbitrary regular Dirichlet form $\E$ and the associated symmetric Markovian semigroup $T_t$, we consider the corresponding Sobolev--Bregman form $\E_p(u) = -\tfrac{1}{p} \tfrac{d}{d t}\bigr\vert_{t = 0} \|T_t u\|_p^p$, where $p \in (1, \infty)$. We prove a variant of the Beurling--Deny formula for $\E_p$. As an application, we prove the corresponding Hardy--Stein identity. Our results extend previous works in this area, which either required that $\E$ is translation-invariant, or that $u$ is sufficiently regular.
\end{abstract}

\maketitle

%a
%                            ---------- o ----------
%

\section{Introduction}
\label{sec:intro}

Let $\E$ be a regular Dirichlet form and denote by $T_t$ the corresponding symmetric Markovian semigroup. For $p \in (1, \infty)$, the \emph{Sobolev--Bregman form} describes the rate of decrease of the $L^p$ norm of $T_t u$ with respect to time: we have $p \E_p(T_t u) = -\tfrac{d}{dt} \|T_t u\|_p^p$. When $p = 2$, $\E_p$ coincides with the original Dirichlet form $\E$. Sobolev--Bregman forms can be traced back to~\cite{s}, see also~\cite{cks,lps,ls,v}, although the name was introduced only recently in~\cite{bgpr} in the context of Douglas-type identities for Lévy operators. Since then, Sobolev--Bregman forms have attracted significant attention. We refer to~\cite{bgp,bgpr} for a detailed discussion and references, to~\cite{bjlp} for an application in the study of Schrödinger operators, to~\cite{bkp} for a probabilistic point of view and extensions, and to~\cite{bdl,bfr,bgpr2,kl} for related developments in the context of operators in domains.

The celebrated Beurling--Deny formula provides a decomposition of a regular Dirichlet form $\E$ into the strongly local term $\El$, the purely nonlocal part given in terms of the \emph{jumping kernel} $J$, and the killing term described by the \emph{killing measure} $k$. Our main result, Theorem~\ref{thm:main}, provides a similar characterisation of the corresponding Sobolev--Bregman form. Specifically, it identifies two functionals on $L^p$, including their domains: one defined in terms of the derivative of $T_t u$ at $t = 0$, see~\eqref{eq:pform:heat}, and another one given explicitly in terms of $\El$, $J$, and $k$, see~\eqref{eq:pform}.

For the fractional Laplace operator $(-\Delta)^s$, where $s \in (0, 1)$, this characterisation was proved in~\cite{bjlp}. An extension to similar Lévy operators is given in~\cite{bgp}. The argument used in these works requires a pointwise estimate of the kernel of $\tfrac{1}{t} T_t$ by the jumping kernel $J$, which need not hold in the general context considered in the present work.

A recent article~\cite{g} of the first named author uses a different approach and covers all pure-jump symmetric Markov semigroups. However, it relies on weak convergence of kernels of $\tfrac{1}{t} T_t$ to the jumping kernel $J$, and accordingly, it only applies to continuous functions in the domain of the Sobolev--Bregman form.

This paper provides a fully general result, relying solely on the theory of Dirichlet forms and Markovian semigroups, along with an elementary but crucial bound, given in Lemma~\ref{lem:main}. While the proof of this key lemma avoids advanced techniques, it requires careful and delicate estimates. The core idea is to use Lemma~\ref{lem:main} to compare the Sobolev--Bregman form and its approximate forms with their $L^2$ counterparts. However, integrability issues make this approximation procedure highly nontrivial.

As a sample application, in Corollary~\ref{cor:hs} we prove a Hardy--Stein identity in an equally general context. Before we state our main results, we introduce the necessary definitions. A more thorough discussion is given in Section~\ref{sec:pre}, and for a complete exposition we refer to~\cite{fot}.

\medskip

Suppose that $E$ is a locally compact, separable metric space, and $m$ is a Radon measure on $E$ with full support. We consider a regular Dirichlet form $\E$ on $E$, and we denote by $T_t$ the associated Markovian semigroup. The definition~\eqref{eq:pform:heat} of the Sobolev--Bregman form is motivated by the following relation between $\E$ and $T_t$:
\begin{align}
\label{eq:form:heat}
\begin{aligned}
 \E(u, v) & = \lim_{t \to 0^+} \frac{1}{t} \int_E (u(x) - T_t u(x)) v(x) m(dx)
\end{aligned}
\end{align}
for every $u, v$ in the domain $\D(\E)$ of $\E$. Additionally, $u \in \D(\E)$ if and only if $u \in L^2(E)$ and $\E(u, u)$ is finite. As is customary, we simply write $\E(u)$ for the quadratic form $\E(u, u)$.

The regular Dirichlet form $\E$ is given by the \emph{Beurling--Deny formula}: there is a \emph{strongly local form} $\El$, a symmetric \emph{jumping kernel} $J$ and a \emph{killing measure} $k$ such that
\begin{align}
\notag
 \E(u, v) & = \El(u, v) \\
\label{eq:form}
 & \qquad + \frac{1}{2} \iint\limits_{(E \times E) \setminus \Delta} (u(y) - u(x)) (v(y) - v(x)) J(dx, dy) \\
\notag
 & \qquad\qquad + \int_E u(x) v(x) k(dx)
\end{align}
for every $u, v \in \D(\E)$, provided that we choose quasi-continuous versions of $u, v$. Here and below $\Delta = \{(x, x) : x \in E\}$ is the diagonal in $E \times E$. Note that for every $u \in \D(\E)$ (in the quasi-continuous version) both integrals in
\begin{align}
\label{eq:form:quadratic}
 \E(u) & = \El(u) + \frac{1}{2} \iint\limits_{(E \times E) \setminus \Delta} (u(y) - u(x))^2 J(dx, dy) + \int_E (u(x))^2 k(dx)
\end{align}
are finite.

Recall that functions $u \in \D(\E)$ are only defined almost everywhere (with respect to $m$), but every $u \in \D(\E)$ is quasi-continuous after modification on a set of zero measure $m$; see Section~\ref{sec:pre}. We say that the form $\E$ is \emph{maximally defined} if every quasi-continuous function $u \in L^2(E)$ such that both integrals in~\eqref{eq:form:quadratic} are finite belongs to the domain $\D(\E)$ of the form $\E$. This appears to be a relatively mild assumption if $\E$ is a pure-jump Dirichlet form; a more detailed discussion is postponed to Section~\ref{sec:pre}.

If $p \in (1, \infty)$, the corresponding Sobolev--Bregman form (or $p$-form) $\E_p$ is defined in a similar way as in~\eqref{eq:form:heat}:
\begin{align}
\label{eq:pform:heat}
 \E_p(u) & = \lim_{t \to 0^+} \frac{1}{t} \int_E (u(x) - T_t u(x)) u\pow{p - 1}(x) m(dx) ,
\end{align}
with domain $\D(\E_p)$ consisting of those $u \in L^p(E)$ for which a finite limit exists. Here and below $s\pow{\alpha} = \abs{s}^\alpha \sign s$, and $u\pow{\alpha}(x) = (u(x))\pow{\alpha}$. Noteworthy, it is known that $\D(\E_p)$ need not be a vector space: it may fail to be closed under addition~\cite{rutkowski}.

If $u$ and $u\pow{p - 1}$ are in the domain of $\E$, then we readily have $\E_p(u) = \E(u, u\pow{p - 1})$. In this case, Beurling--Deny formula~\eqref{eq:form} implies that
\begin{align}
\notag
 \E_p(u) & = \El(u, u\pow{p - 1}) \\
\label{eq:pform:gen}
 & \qquad + \frac{1}{2} \iint\limits_{(E \times E) \setminus \Delta} (u(y) - u(x)) (u\pow{p - 1}(y) - u\pow{p - 1}(x)) J(dx, dy) \\
\notag
 & \qquad\qquad + \int_E \abs{u(x)}^p k(dx)
\end{align}
whenever $u$ is taken as the quasi-continuous version. It is thus natural to ask whether a similar identity holds for all $u \in \D(\E_p)$.

An affirmative answer to this question was known in two cases. For a class of translation-invariant Dirichlet forms $\E$ (such forms correspond to symmetric Lévy operators), this was proved in Lemma~7 in~\cite{bjlp} and Proposition~13 in~\cite{bgp}. The case of pure-jump Dirichlet forms $\E$ with an additional assumption that $u$ is continuous was given in~\cite{g}. Our main result is fully general.

\begin{theorem}[Beurling--Deny formula for Sobolev--Bregman forms]
\label{thm:main}
Let $\E$ be a regular Dirichlet form and $p \in (1, \infty)$. Then the domain $\D(\E_p)$ of the Sobolev--Bregman form $\E_p$ is characterised by
\begin{align*}
 \D(\E_p) & = \{u \in L^p(E) : u\pow{p/2} \in \D(\E)\} ,
\end{align*}
and for every $u \in \D(\E_p)$ we have
\begin{align}
\label{eq:pform:comparability}
 \frac{4 (p - 1)}{p^2} \, \E_2(u\pow{p/2}) \le \E_p(u) & \le 2 \E_2(u\pow{p/2}) .
\end{align}
Furthermore, for every $u \in \D(\E_p)$ we have the following analogue of the Beurling--Deny formula:
\begin{align}
\notag
 \E_p(u) & = \frac{4 (p - 1)}{p^2} \, \El(u\pow{p/2}) \\
\label{eq:pform}
  & \qquad + \frac{1}{2} \iint\limits_{(E \times E) \setminus \Delta} (\tilde{u}(y) - \tilde{u}(x)) (\tilde{u}\pow{p - 1}(y) - \tilde{u}\pow{p - 1}(x)) J(dx, dy) \\
\notag
  & \qquad\qquad + \int_E \abs{\tilde{u}(x)}^p k(dx) ,
\end{align}
where $\tilde{u}$ is the quasi-continuous modification of $u$.

In particular, if $u \in \D(\E_p)$, then $u$ has a quasi-continuous modification $\tilde{u}$ such that both integrals in~\eqref{eq:pform} are finite. If the Dirichlet form $\E$ is maximally defined, then the converse is true: every $u \in L^p(E)$ which has a quasi-continuous modification $\tilde{u}$ such that the two integrals in~\eqref{eq:pform} are finite, belongs to $\D(\E_p)$.
\end{theorem}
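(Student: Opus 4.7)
The plan is to leverage the pointwise inequality from Lemma~\ref{lem:main},
\begin{align*}
\tfrac{4(p-1)}{p^2}(a\pow{p/2} - b\pow{p/2})^2 \le (a-b)(a\pow{p-1} - b\pow{p-1}) \le 2(a\pow{p/2} - b\pow{p/2})^2,
\end{align*}
to reduce everything to the classical Dirichlet-form theory applied to $u\pow{p/2}$. Writing $\E^{(t)}(u, v) := t^{-1}\int(u - T_t u) v\,dm$ for the approximating bilinear form and using that the sub-Markovian kernel $p_t(x, dy)$ of $T_t$ satisfies $p_t(x, dy)\,m(dx) = p_t(y, dx)\,m(dy)$, a direct symmetrisation yields
\begin{align*}
\E^{(t)}(u, v) = \int u v\,\tfrac{1 - T_t 1}{t}\,dm + \tfrac{1}{2t}\iint(u(y) - u(x))(v(y) - v(x))\,p_t(x, dy)\,m(dx).
\end{align*}
Setting $(u, v) = (u, u\pow{p-1})$ and $(u\pow{p/2}, u\pow{p/2})$ respectively, the killing-like integrals coincide (since $u \cdot u\pow{p-1} = (u\pow{p/2})^2 = |u|^p$), and applying the pointwise Lemma to the remaining integrand—together with $1 - 4(p-1)/p^2 = (p-2)^2/p^2 \ge 0$, which allows the killing contribution to be absorbed on the left—delivers the approximate sandwich
\begin{align*}
\tfrac{4(p-1)}{p^2}\,\E^{(t)}(u\pow{p/2}) \le \E^{(t)}(u, u\pow{p-1}) \le 2\,\E^{(t)}(u\pow{p/2}), \qquad u \in L^p(E).
\end{align*}

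Since $u \in L^p(E)$ iff $u\pow{p/2} \in L^2(E)$, and since by the spectral theorem $t \mapsto \E^{(t)}(u\pow{p/2})$ is non-increasing with limit $\E(u\pow{p/2}) \in [0, \infty]$, finite exactly when $u\pow{p/2} \in \D(\E)$, the sandwich both characterises $\D(\E_p)$ and yields~\eqref{eq:pform:comparability}, once boundedness of $\E^{(t)}(u, u\pow{p-1})$ has been upgraded to convergence (see below). To obtain the explicit formula~\eqref{eq:pform} I would start with $u \in \D(\E) \cap L^\infty(E)$ for which also $u\pow{p-1} \in \D(\E)$: there $\E_p(u) = \E(u, u\pow{p-1})$, and the classical Beurling--Deny formula~\eqref{eq:form} applied with $v = u\pow{p-1}$ gives~\eqref{eq:pform} at once. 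For general $u \in \D(\E_p)$, I would approximate by suitable truncations $u_n$ of $u$; each $u_n\pow{p/2}$ is a normal contraction of $u\pow{p/2}$ and so converges to it in $\D(\E)$, each term on the right of~\eqref{eq:pform} passes to the limit (the non-local term by dominated convergence against the $J$-integrable majorant $2(u\pow{p/2}(y) - u\pow{p/2}(x))^2$ furnished by Lemma~\ref{lem:main}), and $\E_p(u_n) \to \E_p(u)$ is obtained by interchanging the $t \to 0^+$ and $n \to \infty$ limits in $\E^{(t)}(u_n, u_n\pow{p-1})$ with the sandwich as uniform control. The converse in the maximally defined case is then immediate: by Lemma~\ref{lem:main}, finiteness of the two integrals in~\eqref{eq:pform} forces finiteness of the two integrals in~\eqref{eq:form:quadratic} applied to $u\pow{p/2}$, hence $u\pow{p/2} \in \D(\E)$ by hypothesis.

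I expect the principal obstacle to be precisely this upgrade from boundedness to convergence of $\E^{(t)}(u, u\pow{p-1})$ in the regime where $u$ is merely in $L^p(E)$—not bounded and not necessarily in $L^2(E)$. This is the limitation that blocked the earlier approaches of~\cite{bjlp, bgp, g}: the non-local integrand $(u(y) - u(x))(u\pow{p-1}(y) - u\pow{p-1}(x))$ is not monotone in $t$, and truncation is not available before the formula is established. The intended resolution is that the upper bound in Lemma~\ref{lem:main} supplies the pointwise domination
\begin{align*}
\bigl|(u(y) - u(x))(u\pow{p-1}(y) - u\pow{p-1}(x))\bigr| \le 2(u\pow{p/2}(y) - u\pow{p/2}(x))^2,
\end{align*}
and this majorant, combined with the known convergence of the $L^2$ approximation $\E^{(t)}(u\pow{p/2}) \to \E(u\pow{p/2})$ and the Beurling--Deny decomposition of $\E$ into strongly local, non-local and killing pieces, provides the dominated-convergence input for the kernel measures $t^{-1} p_t(x, dy)\,m(dx)$ and for the killing densities $t^{-1}(1 - T_t 1)$. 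Making this delicate bookkeeping rigorous—so that the single elementary Lemma~\ref{lem:main} carries the entire integrability burden—is the technical heart of the argument.
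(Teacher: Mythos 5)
Your proposal correctly identifies the easy parts: the sandwich inequality (your opening display is actually Lemma~\ref{lem:forms}, the Stroock inequality, not Lemma~\ref{lem:main}) applied at the approximate-form level proves Part~I of the characterisation of $\D(\E_p)$, the comparability~\eqref{eq:pform:comparability}, and, together with maximality, the final claim of the theorem. This matches the paper's Parts~I and~II.

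The genuine gap is in the passage from $u\pow{p/2} \in \D(\E)$ to the explicit formula~\eqref{eq:pform}. Your resolution relies on a dominated-convergence argument for the family of measures $\tfrac{1}{2t}T_t(dx,dy)$ as $t \to 0^+$, with $2(u\pow{p/2}(y)-u\pow{p/2}(x))^2$ as a $J$-integrable majorant. This cannot work, for two reasons. First, the measures $\tfrac{1}{2t}T_t(dx,dy)$ do not converge as measures: their total mass near the diagonal blows up, which is precisely the source of the strongly local term $\El$. There is no fixed reference measure against which to invoke dominated convergence, and a $J$-integrable majorant gives no control over the mass escaping to the diagonal. Second, even the generalised dominated convergence principle (dominating sequence with convergent integrals) requires a fixed underlying measure; knowing that $\tfrac{1}{2t}\iint (v(y)-v(x))^2 T_t(dx,dy) \to \E(v)$ does not transfer to the integral of the smaller quantity $\Phi_\alpha$. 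The same issue afflicts your proposed limit interchange $\E_p(u_n) \to \E_p(u)$: the sandwich gives no uniform-in-$t$ control of the error $\E_p^{(t)}(u_n) - \E_p^{(t)}(u)$.

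The paper's Lemma~\ref{lem:main} is not the sandwich inequality; it is a carefully engineered truncation estimate. For each $n$ it introduces cutoffs $\ph_{\alpha,n}$ and an auxiliary Lipschitz function $\psi_n$ so that, setting $v_{\alpha,n} = n^{-2\alpha}\ph_{\alpha,n}(n^2 v)$ and $w_n = n^{-2}\psi_n(n^2 v)$ (all in $\D(\E)$, so the classical bilinear formula~\eqref{eq:jump:approx} applies), the pointwise error obeys
\begin{align*}
\bigl|\Phi_\alpha(x,y) - \Phi_{\alpha,n}(x,y)\bigr| \le 8 n^{-\min\{\alpha, 2-\alpha\}} (v(y)-v(x))^2 + 180\, (w_n(y) - w_n(x))^2 .
\end{align*}
Each side of this bound is a quantity for which both the $t\to0^+$ limit of the $T_t$-integral and the $n\to\infty$ limit of the $J$-integral and of $\El$ can be computed using $L^2$ Dirichlet-form theory and LeJan's energy-measure formula~\eqref{eq:form:local}. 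The proof then closes by a $\limsup$ argument: the discrepancy $L$ between the two sides of~\eqref{eq:pform} is bounded by a sum of five terms, each tending to zero as $n\to\infty$. None of this is dominated convergence against the kernel measures; it is a uniform-in-$t$ error control that your sketch does not supply. In short, you have the right reduction and the right sandwich for Parts~I and~II, but you have not found the truncation mechanism that is the actual engine of Part~III.
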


When $E = \R^n$ and the domain of $\E$ contains smooth compactly supported functions, then the strongly local part $\El$ has a more explicit description: if $u$ is sufficiently regular, we have
\begin{align*}
 \El(u) & = \int_E \sum_{i, j = 1}^n \frac{\partial u}{\partial x_i}(x) \frac{\partial u}{\partial x_j}(x) \nu_{i, j}(dx)
\end{align*}
for some locally finite measures $\nu_{i, j}$. More details are given in Section~\ref{sec:example}. The above formula for $\El$ explains the constant $4 p^{-2} (p - 1)$ in~\eqref{eq:pform}: for sufficiently regular $u$, we have $\nabla(u\pow{p - 1}) = (p - 1) |u|^{p - 2} \nabla u$ and $\nabla(u\pow{p/2}) = \tfrac{p}{2} |u|^{p/2} \nabla u$, leading to equality of the strictly local parts in~\eqref{eq:pform:gen} and~\eqref{eq:pform}, $\El(u, u\pow{p - 1}) = 4 p^{-2} (p - 1) \El(u\pow{p/2}, u\pow{p/2})$. While this calculation is valid only when $E = \R^n$, it turns out that the constant remains the same in the general case.

We remark that~\eqref{eq:pform} can be equivalently written as
\begin{align*}
 \E_p(u) & = \frac{4 (p - 1)}{p^2} \El(u\pow{p/2}) + \frac{1}{p} \iint\limits_{(E \times E) \setminus \Delta} F_p(\tilde{u}(x), \tilde{u}(y)) J(dx, dy) + \int_E \abs{\tilde{u}(x)}^p k(dx) ,
\end{align*}
where for $\xi, \eta \in \R$,
\begin{align*}
 F_p(\xi, \eta) & = \abs{\eta}^p - \abs{\xi}^p - p \xi\pow{p - 1} (\eta - \xi)
\end{align*}
is the \emph{Bregman divergence}. To prove that this is indeed equivalent to~\eqref{eq:pform}, it suffices to observe that the jumping kernel $J$ is symmetric, $F_p(\xi, \eta) \ge 0$, and $F_p(\xi, \eta) + F_p(\eta, \xi) = p (\xi - \eta) (\xi\pow{p - 1} - \eta\pow{p - 1})$. We also note that this connection with the Bregman divergence was the reason for the authors of~\cite{bgpr} to propose the name \emph{Sobolev--Bregman form}.

Comparability~\eqref{eq:pform:comparability} of $\E_p(u)$ and $\E(u\pow{p/2})$ for $u$ in the domain of the generator of $T_t$ on $L^p(E)$ is relatively straightforward. This special case of Theorem~\ref{thm:main} has appeared in various contexts in the literature, sometimes with additional assumptions $p \ge 2$ or $u \ge 0$. A one-sided bound was given as Lemma~9.9 in~\cite{s} in the study of logarithmic Sobolev inequalities. The two-sided estimate appeared as equation~(3.17) in~\cite{cks} in the proof of upper bounds for the heat kernel. The same result was given in Theorem~1 in~\cite{ls} and Theorem~3.1 in~\cite{lps} in the context of perturbation theory. The variety of applications of this special case of Theorem~\ref{thm:main} hints how useful the result in full generality can be.

We expect Theorem~\ref{thm:main} to stimulate the study of general symmetric Markovian semigroups on $L^p$ for $p \ne 2$. Sobolev--Bregman forms found applications in nonlinear nonlocal PDEs in~\cite{bgpr} and in Hardy inequalities for the fractional Laplacian in~\cite{bjlp}. In the latter reference, the authors consider the fractional Laplace operator perturbed by a particular Schrödinger potential and study when the corresponding semigroups of non-Markovian operators remain contractions on $L^p(E)$. Our result may enable similar problems to be addressed in greater generality.

As we have already noted above, the Sobolev--Bregman form describes the rate of decrease of the $L^p(E)$ norm of $T_t u$. More precisely, we have $\tfrac{d}{d t} \norm{T_t u}_p^p = -p \E_p(T_t u)$ for $t > 0$; see the proof of Theorem~3.1 in~\cite{g}. As an immediate consequence of our Theorem~\ref{thm:main}, we obtain the following explicit variant of the \emph{Hardy--Stein identity}, which generalises the main result (Theorem~1.1) of~\cite{g}.

\begin{corollary}
\label{cor:hs}
Let $\E$ be a regular Dirichlet form with the jumping kernel $J$ and the killing measure $k$, and let $p \in (1, \infty)$. For every $u \in L^p(E)$, we have
\begin{align}
\notag
 & \norm{u}_p^p - \lim_{t \to \infty} \norm{T_t u}_p^p \\
\label{eq:hs}
 & \qquad = \frac{4 (p - 1)}{p} \int_0^\infty \El((T_t u)\pow{p/2}) dt \\
\notag
 & \qquad\qquad + \frac{p}{2} \int_0^\infty \iint\limits_{(E \times E) \setminus \Delta} (T_t u(y) - T_t u(x)) ((T_t u(y))\pow{p - 1} - (T_t u(x))\pow{p - 1}) J(dx, dy) dt \\
\notag
 & \qquad\qquad\qquad + p \int_0^\infty \int_E \abs{T_t u(x)}^p k(dx) dt ,
\end{align}
where $T_t u$ is assumed to be the quasi-continuous modification.
\end{corollary}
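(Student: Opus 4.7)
The plan is to start from the differential form of the Hardy--Stein identity, namely
\begin{align*}
 \tfrac{d}{dt} \norm{T_t u}_p^p & = -p \, \E_p(T_t u) \qquad\text{for } t > 0,
\end{align*}
which is a known consequence of the contractivity of $T_t$ on $L^p(E)$ and the strong continuity of the semigroup; the argument is sketched in the proof of Theorem~3.1 in~\cite{g}, and relies on the facts that $T_t u \in \D(\E_p)$ for every $t > 0$ (since $T_t u$ lies in the domain of the $L^p$-generator of the semigroup, which is contained in $\D(\E_p)$) and that $\E_p$ agrees with the $L^p$ pairing of $u$ with $-A_p u$ there. Granting this, I would integrate from $t = \eps$ to $t = M$ to obtain
\begin{align*}
 \norm{T_\eps u}_p^p - \norm{T_M u}_p^p & = p \int_\eps^M \E_p(T_t u) \, dt ,
\end{align*}
where the integrand on the right-hand side is nonnegative since $\E_p \ge 0$ on its domain.

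Next I would pass to the limit. As $\eps \to 0^+$, strong continuity of $T_t$ on $L^p(E)$ yields $\norm{T_\eps u}_p^p \to \norm{u}_p^p$. As $M \to \infty$, $\norm{T_M u}_p^p$ converges to its limit (which exists, since $\norm{T_t u}_p^p$ is nonincreasing in $t$ by contractivity). Monotone convergence on the right-hand side (the integrand is nonnegative) then gives
\begin{align*}
 \norm{u}_p^p - \lim_{t \to \infty} \norm{T_t u}_p^p & = p \int_0^\infty \E_p(T_t u) \, dt .
\end{align*}

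To conclude, I would apply Theorem~\ref{thm:main} to $T_t u \in \D(\E_p)$ for each fixed $t > 0$, expressing $\E_p(T_t u)$ via the Beurling--Deny-type decomposition~\eqref{eq:pform} with the quasi-continuous modification of $T_t u$. Multiplying by $p$ and integrating in $t$ distributes across the three nonnegative terms, matching the right-hand side of~\eqref{eq:hs} term by term. Since each summand in~\eqref{eq:pform} is nonnegative (for the strongly local part this is standard; for the jumping-kernel integral, symmetry of $J$ together with $(b - a)(b\pow{p-1} - a\pow{p-1}) \ge 0$; and for the killing part, obvious), Tonelli's theorem legitimises the swap of $\int_0^\infty dt$ with $\El$, the double integral against $J$, and the integral against $k$ without any additional hypothesis.

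The main obstacle, which I would take from~\cite{g}, is the derivative identity $\tfrac{d}{dt}\norm{T_t u}_p^p = -p\E_p(T_t u)$: once this is in hand, everything else is a matter of integrating a nonnegative identity and inserting the decomposition supplied by Theorem~\ref{thm:main}. In particular, no integrability issue arises in the time integration because all terms produced by~\eqref{eq:pform} are nonnegative, and the overall integral is controlled by $\norm{u}_p^p$.
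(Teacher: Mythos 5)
Your proposal is correct and follows the same route as the paper. The paper's proof is a two-line citation: it invokes Theorem~3.1 and Remark~3.2 of~\cite{g} for the identity $\norm{u}_p^p - \lim_{t \to \infty} \norm{T_t u}_p^p = p \int_0^\infty \E_p(T_t u)\, dt$, and then substitutes the Beurling--Deny decomposition from Theorem~\ref{thm:main} for each $\E_p(T_t u)$; your write-up simply unfolds the cited step (integrating $\tfrac{d}{dt}\norm{T_t u}_p^p = -p\,\E_p(T_t u)$ from $\eps$ to $M$, passing to limits, using monotone convergence on the nonnegative integrand) and handles the term-by-term distribution of the time integral via nonnegativity, which is the same argument in more detail.
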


By Theorem~\ref{thm:main}, the right-hand side of~\eqref{eq:hs} is equal to $\int_0^\infty p \E_p(T_t u) dt$ (note that $T_t u \in \D(\E_p)$ for every $t > 0$; see the proof of Theorem~3.1 in~\cite{g}). Thus, Corollary~\ref{cor:hs} indeed follows directly from Theorem~3.1 and Remark~3.2 in~\cite{g} (see also formula~(1.1) in~\cite{v}).

The above Hardy--Stein identity may find applications in the Littlewood--Paley--Stein theory for nonlocal operators, in a similar way as in~\cite{bbl,bk}. For closely related results obtained using different methods, we refer to~\cite{bb,kim,kk,kkk,lw}. We also expect applications in stochastic differential equations, as in~\cite{kkk}.

Finally, we point out that Corollary~\ref{cor:hs} resolves the following problem about Hardy--Stein identity and its applications posed by the authors of~\cite{bbl}: \emph{The results should hold in a much more general setting, but the scope of the extension is unclear at this moment} (see p.~463 therein). Our result also shows that the claim made by the authors of~\cite{lw}: \emph{It seems that such an identity depends heavily on the characterisation of Lévy processes, and may not hold for general jump processes} (see p.~424 therein) is not true.

The remaining part of the paper consists of four sections. Elementary lemmas are gathered in Section~\ref{sec:est}. In Section~\ref{sec:pre} we recall the definition of regular Dirichlet forms and their properties. Theorem~\ref{thm:main} is proved in Section~\ref{sec:proof}. In Section~\ref{sec:example} we discuss Dirichlet and Sobolev--Bregman forms on Euclidean spaces.

%
%                            ---------- o ----------
%

\section{Elementary estimates}
\label{sec:est}

Our first result in this section is a well-known auxiliary inequality. When $s, t \ge 0$, this is often called \emph{Stroock's inequality}; see the proof of Lemma~9.9 in~\cite{s}, Lemma on page~246 of~\cite{v}, or page~269 in~\cite{cks}. The general case is given, for example, as Lemma~1 in~\cite{ls}, or Lemma~2.1 in~\cite{lps}. For completeness, we present a brief proof.

\begin{lemma}
\label{lem:forms}
Let $\alpha \in (0, 2)$. Then
\begin{align}
\label{eq:forms}
 \alpha (2 - \alpha) (t - s)^2 \le (t\pow{\alpha} - s\pow{\alpha}) (t\pow{2 - \alpha} - s\pow{2 - \alpha}) \le 2 (t - s)^2
\end{align}
for all $s, t \in \R$.
\end{lemma}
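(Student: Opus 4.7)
The plan is to reduce by symmetry to two sign configurations and then apply elementary inequalities. Both sides of~\eqref{eq:forms} are invariant under the swap $(s, t) \mapsto (t, s)$ and under $(s, t) \mapsto (-t, -s)$, the latter because $r \mapsto r\pow{\alpha}$ is odd. Hence the case $s \le t \le 0$ reduces to $0 \le -t \le -s$, and it suffices to treat (a) $0 \le s \le t$ and (b) $s \le 0 \le t$.

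For the lower bound I would use the integral representation
\begin{align*}
 t\pow{\alpha} - s\pow{\alpha} = \alpha \int_s^t |r|^{\alpha - 1} dr ,
\end{align*}
valid for every $\alpha > 0$ and $s \le t$ (the singularity at the origin is integrable since $\alpha > 0$), together with the analogous identity for $2 - \alpha$, valid since $\alpha < 2$. The Cauchy--Schwarz inequality then gives
\begin{align*}
 \left( \int_s^t |r|^{\alpha - 1} dr \right) \left( \int_s^t |r|^{1 - \alpha} dr \right) \ge \left( \int_s^t 1 \, dr \right)^2 = (t - s)^2 ,
\end{align*}
and multiplying by $\alpha (2 - \alpha)$ yields the left-hand inequality in~\eqref{eq:forms}.

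For the upper bound I would expand the product using the identity $s\pow{\alpha} \cdot s\pow{2 - \alpha} = s^2$, which gives
\begin{align*}
 (t\pow{\alpha} - s\pow{\alpha}) (t\pow{2 - \alpha} - s\pow{2 - \alpha}) = t^2 + s^2 - t\pow{\alpha} s\pow{2 - \alpha} - s\pow{\alpha} t\pow{2 - \alpha} .
\end{align*}
In case (a) the cross terms equal $t^\alpha s^{2 - \alpha} + s^\alpha t^{2 - \alpha}$, which is at least $2 s t$ by AM--GM, so the quantity is bounded above by $(t - s)^2 \le 2 (t - s)^2$. In case (b), writing $a = -s \ge 0$, the cross terms become $-(t^\alpha a^{2 - \alpha} + a^\alpha t^{2 - \alpha})$; Young's inequality with conjugate exponents $2/\alpha$ and $2/(2 - \alpha)$ gives $t^\alpha a^{2 - \alpha} + a^\alpha t^{2 - \alpha} \le t^2 + a^2$, leading to an upper bound of $2 (t^2 + a^2) \le 2 (t + a)^2 = 2 (t - s)^2$.

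The argument is essentially routine, with no genuine obstacle. The only points requiring attention are that the integral representation is valid across the origin for $\alpha \in (0, 2)$, and that the upper bound is obtained from two slightly different elementary inequalities depending on whether $s$ and $t$ have the same sign or opposite signs.
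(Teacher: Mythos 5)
Your proof is correct and follows essentially the same route as the paper: an integral representation plus an averaging inequality for the lower bound, and an algebraic expansion of the product plus AM--GM for the upper bound when $s,t$ have the same sign. The only genuine divergence is in the opposite-sign upper bound, where you invoke Young's inequality with exponents $2/\alpha$, $2/(2-\alpha)$ to bound the cross terms by $t^2 + s^2$, whereas the paper adds the (nonnegative, since $t\pow{\alpha}+s\pow{\alpha}$ and $t\pow{2-\alpha}+s\pow{2-\alpha}$ share a sign when $st<0$) quantity $(t\pow{\alpha}+s\pow{\alpha})(t\pow{2-\alpha}+s\pow{2-\alpha})$ to collapse the product to $2(s^2+t^2)$ outright; both are one-line elementary arguments reaching the same intermediate bound $2(s^2+t^2)\le 2(t-s)^2$.
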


\begin{proof}
By symmetry, with no loss of generality, we may assume that $s \le t$. Let us denote by $I$ the middle expression in~\eqref{eq:forms}:
\begin{align*}
 I & = (t\pow{\alpha} - s\pow{\alpha}) (t\pow{2 - \alpha} - s\pow{2 - \alpha}) .
\end{align*}
If $s t \ge 0$, then, by the AM-GM inequality,
\begin{align*}
 \frac{s\pow{\alpha} t\pow{2 - \alpha} + t\pow{\alpha} s\pow{2 - \alpha}}{2} & = \frac{\abs{s}^\alpha \abs{t}^{2 - \alpha} + \abs{t}^\alpha \abs{s}^{2 - \alpha}}{2} \ge \abs{s} \abs{t} = s t ,
\end{align*}
and so
\begin{align*}
 I  & = s^2 + t^2 - (s\pow{\alpha} t\pow{2 - \alpha} + t\pow{\alpha} s\pow{2 - \alpha}) \\
 & \le s^2 + t^2 - 2 s t = (t - s)^2 .
\end{align*}
If $s t < 0$, then $t\pow{\alpha} + s\pow{\alpha}$ and $t\pow{2 - \alpha} + s\pow{2 - \alpha}$ have equal sign, and hence
\begin{align*}
 I & \le I + (t\pow{\alpha} + s\pow{\alpha}) (t\pow{2 - \alpha} + s\pow{2 - \alpha}) \\
 & = 2 s^2 + 2 t^2 \le 2 (s^2 + t^2 - 2 s t) = 2 (t - s)^2 .
\end{align*}
This completes the proof of the upper bound. For the lower bound, observe that
\begin{align*}
 I & = \alpha (2 - \alpha) \int_s^t \int_s^t \abs{x}^{\alpha - 1} \abs{y}^{1 - \alpha} dy dx \\
 & = \alpha (2 - \alpha) \int_s^t \int_s^t \frac{\abs{x}^{\alpha - 1} \abs{y}^{1 - \alpha} + \abs{y}^{\alpha - 1} \abs{x}^{1 - \alpha}}{2} \, dy dx .
\end{align*}
Using the AM--GM inequality again, we see that the integrand on the right-hand side is at least $1$, and hence
\begin{align*}
 I & \ge \alpha (2 - \alpha) (t - s)^2 ,
\end{align*}
as desired.
\end{proof}

The following lemma is our key technical result.

\begin{figure}
\centering
\begin{tikzpicture}[scale=0.72]
\footnotesize
\draw[dotted] (1,5) -- (1,0) node[below right] {$1$\vphantom{$n^4$}} -- (1,-5);
\draw[dotted] (-1,5) -- (-1,0) node[below left] {$-1$\vphantom{$n^4$}} -- (-1,-5);
\draw[dotted] (2,5) -- (2,0) node[below right] {$n$\vphantom{$n^4$}} -- (2,-2.5);
\draw[dotted] (-2,5) -- (-2,0) node[below left] {$-n$\vphantom{$n^4$}} -- (-2,-5);
\draw[dotted] (4,5) -- (4,0) node[below right] {$n^3$\vphantom{$n^4$}} -- (4,-5);
\draw[dotted] (-4,5) -- (-4,0) node[below left] {$-n^3$\vphantom{$n^4$}} -- (-4,-5);
\draw[dotted] (5,5) -- (5,0) node[below right] {$n^4$\vphantom{$n^4$}} -- (5,-5);
\draw[dotted] (-5,5) -- (-5,0) node[below left] {$-n^4$\vphantom{$n^4$}} -- (-5,-5);
\draw[dotted] (-6.5,2) -- (0,2) node[above left] {$1$\vphantom{$n^4$}} -- (6.5,2);
\draw[dotted] (-6.5,-2) -- (0,-2) node[below left] {$-1$\vphantom{$n^4$}} -- (6.5,-2);
\draw[very thick,teal] (-6.5,-4.4) -- (-5,-4.4) .. controls (-3,-3.8) and (-2,-3) .. (-1,-2) -- (0,0) -- (1,2) .. controls (2,3) and (3,3.8) .. (5,4.4) -- (6.5,4.4);
\node[teal,left] at (6.5,3.6) {$\ph_{\alpha, n}$};
\draw[very thick,dashed] (-6.5,-4.8) .. controls (-6,-4.7) and (-5.5,-4.6) .. (-5,-4.4) .. controls (-3,-3.8) and (-2,-3) .. (-1,-2) .. controls (-0.5,-1.5) and (0,-1) .. (0,0) .. controls (0,1) and (0.5,1.5) .. (1,2) .. controls (2,3) and (3,3.8) .. (5,4.4) .. controls (5.5,4.6) and (6,4.7) .. (6.5,4.8);
\node[left] at (6.5,5.4) {$\ph_\alpha$};
\draw[->] (-6.5,0) -- (6.5,0) node[below] {$t$};
\draw[->] (0,-5) -- (0,5) node[left] {$y$};
\end{tikzpicture}
\begin{tikzpicture}[scale=0.72]
\footnotesize
\draw[dotted] (1,6.5) -- (1,0) node[below right] {$1$\vphantom{$n^4$}} -- (1,-6.5);
\draw[dotted] (-1,6.5) -- (-1,0) node[below left] {$-1$\vphantom{$n^4$}} -- (-1,-6.5);
\draw[dotted] (2,6.5) -- (2,0) node[below right] {$n$\vphantom{$n^4$}} -- (2,-6.5);
\draw[dotted] (-2,6.5) -- (-2,0) node[below left] {$-n$\vphantom{$n^4$}} -- (-2,-6.5);
\draw[dotted] (4,6.5) -- (4,0) node[below right] {$n^3$\vphantom{$n^4$}} -- (4,-6.5);
\draw[dotted] (-4,6.5) -- (-4,0) node[below left] {$-n^3$\vphantom{$n^4$}} -- (-4,-6.5);
\draw[dotted] (5,6.5) -- (5,0) node[below right] {$n^4$\vphantom{$n^4$}} -- (5,-6.5);
\draw[dotted] (-5,6.5) -- (-5,0) node[below left] {$-n^4$\vphantom{$n^4$}} -- (-5,-6.5);
\draw[dotted] (-6.5,0.7) -- (0,0.7) node[above left] {$1$\vphantom{$n^4$}} -- (6.5,0.7);
\draw[dotted] (-6.5,-0.7) -- (0,-0.7) node[below left] {$-1$\vphantom{$n^4$}} -- (6.5,-0.7);
\draw[very thick,teal] (-6.5,-5.5) -- (-5,-5.5) .. controls (-3.5,-3) and (-3,-2.7) .. (-1,-0.7) -- (0,0) -- (1,0.7) .. controls (3,2.7) and (3.5,3) .. (5,5.5) -- (6.5,5.5);
\node[teal,left] at (6.6,5) {$\ph_{2 - \alpha, n}$};
\draw[very thick,dashed] (-5.6,-6.5) .. controls (-5.4,-6.1) and (-5.2,-5.8) .. (-5,-5.5) .. controls (-3.5,-3) and (-3,-2.7) .. (-1,-0.7) .. controls (-0.6,-0.3) and (-0.3,0) .. (0,0) .. controls (0.3,0) and (0.6,0.3) .. (1,0.7) .. controls (3,2.7) and (3.5,3) .. (5,5.5) .. controls (5.2,5.8) and (5.4,6.1) .. (5.6,6.5);
\node[left] at (5.3,6.1) {$\ph_{2 - \alpha}$};
\draw[->] (-6.5,0) -- (6.5,0) node[below] {$t$};
\draw[->] (0,-6.5) -- (0,6.5) node[left] {$y$};
\end{tikzpicture}
\begin{tikzpicture}[scale=0.72]
\footnotesize
\draw[dotted] (1,4) -- (1,0) node[below right] {$1$\vphantom{$n^4$}} -- (1,-4);
\draw[dotted] (-1,4) -- (-1,0) node[below left] {$-1$\vphantom{$n^4$}} -- (-1,-4);
\draw[dotted] (2,4) -- (2,0) node[below right] {$n$\vphantom{$n^4$}} -- (2,-4);
\draw[dotted] (-2,4) -- (-2,0) node[below left] {$-n$\vphantom{$n^4$}} -- (-2,-4);
\draw[dotted] (4,4) -- (4,0) node[below right] {$n^3$\vphantom{$n^4$}} -- (4,-4);
\draw[dotted] (-4,4) -- (-4,0) node[below left] {$-n^3$\vphantom{$n^4$}} -- (-4,-4);
\draw[dotted] (5,4) -- (5,0) node[below right] {$n^4$\vphantom{$n^4$}} -- (5,-4);
\draw[dotted] (-5,4) -- (-5,0) node[below left] {$-n^4$\vphantom{$n^4$}} -- (-5,-4);
\draw[dotted] (-6.5,1) -- (0,1) node[above left] {$1$\vphantom{$n^4$}} -- (6.5,1);
\draw[dotted] (-6.5,-1) -- (0,-1) node[below left] {$-1$\vphantom{$n^4$}} -- (6.5,-1);
\draw[dotted] (-6.5,2) -- (0,2) node[above left] {$n$\vphantom{$n^4$}} -- (6.5,2);
\draw[dotted] (-6.5,-2) -- (0,-2) node[below left] {$-n$\vphantom{$n^4$}} -- (6.5,-2);
\draw[very thick] (-6,-4) -- (-4,-2) -- (-2,-2) -- (2,2) -- (4,2) -- (6,4);
\node[left] at (6.5,3.5) {$\psi_n$};
\draw[->] (-6.5,0) -- (6.5,0) node[below] {$t$};
\draw[->] (0,-4) -- (0,4) node[left] {$y$};
\end{tikzpicture}
\caption{Functions defined in Lemma~\ref{lem:main} (not to scale).}
\label{fig:functions}
\end{figure}
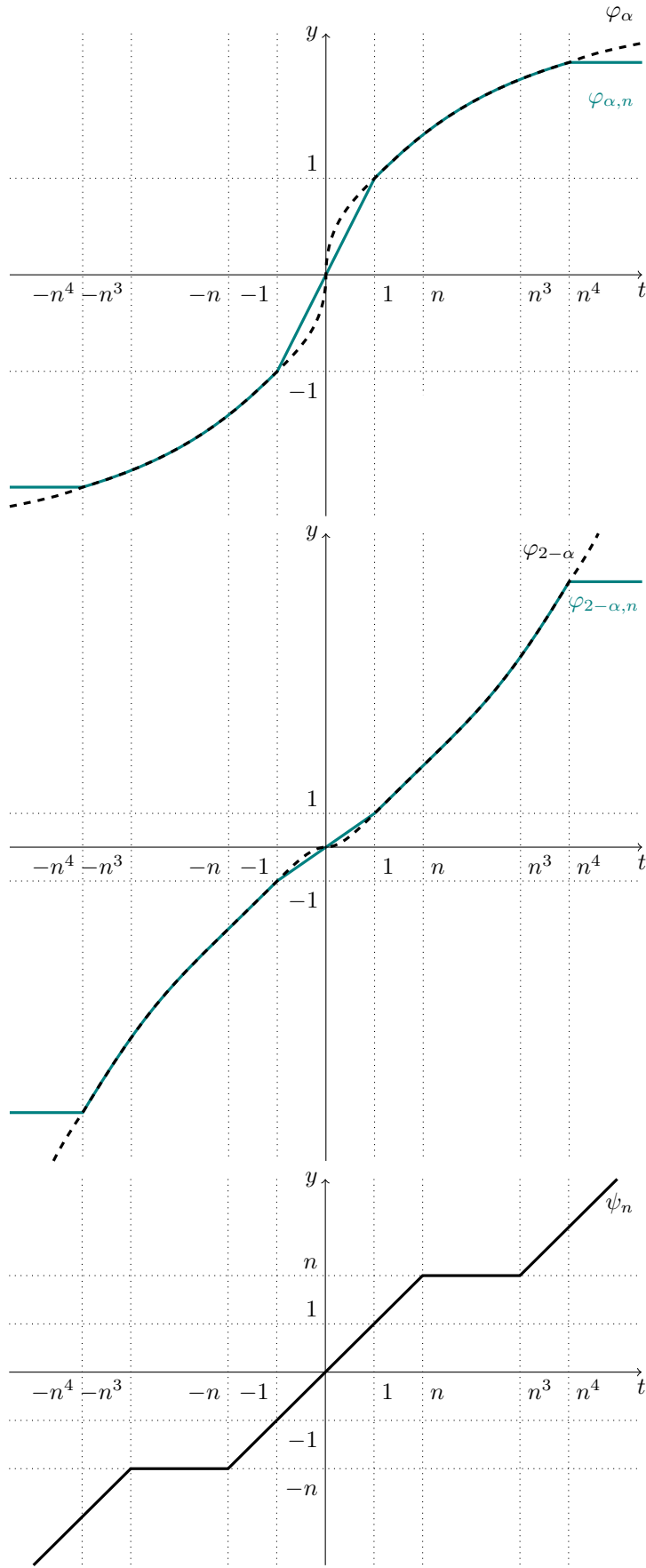

\begin{lemma}
\label{lem:main}
For $\alpha \in (0, 2)$ and a number $n \ge 2$, denote (see Figure~\ref{fig:functions})
\begin{align*}
 \ph_\alpha(s) & = s\pow{\alpha} , \\
 \ph_{\alpha, n}(s) & = \begin{cases}
  s \hspace*{8em} & \text{if $\abs{s} < 1$}, \\
  s\pow{\alpha} & \text{if $1 \le \abs{s} < n^4$}, \\
  n^{4 \alpha} \sign s & \text{if $n^4 \le \abs{s}$},
 \end{cases} \\
 \psi_n(s) & = \begin{cases}
  s \hspace*{8em} & \text{if $\abs{s} < n$}, \\
  n \sign s & \text{if $n \le \abs{s} < n^3$}, \\
  s - (n^3 - n) \sign s & \text{if $n^3 \le \abs{s}$}.
 \end{cases}
\end{align*}
Then,
\begin{align}
\label{eq:main}
 \begin{aligned}
 & \Bigl| (\ph_{\alpha, n}(t) - \ph_{\alpha, n}(s)) (\ph_{2 - \alpha, n}(t) - \ph_{2 - \alpha, n}(s)) \\
 & \qquad\qquad - (\ph_\alpha(t) - \ph_\alpha(s))(\ph_{2 - \alpha}(t) - \ph_{2 - \alpha}(s)) \Bigr| \\
 & \qquad\qquad\qquad\qquad \le 8 n^{-\min\{\alpha, 2 - \alpha\}} (t - s)^2 + 180 (\psi_n(t) - \psi_n(s))^2
 \end{aligned}
\end{align}
for all $n \ge 2$ and all $s, t \in \R$.
\end{lemma}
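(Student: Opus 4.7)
The plan is to express both products in~\eqref{eq:main} using the integral representations
\[
 \ph_\alpha(t) - \ph_\alpha(s) = \int_s^t \alpha \abs{x}^{\alpha - 1} dx, \qquad \ph_{\alpha, n}(t) - \ph_{\alpha, n}(s) = \int_s^t \rho_\alpha(x) dx,
\]
where $\rho_\alpha = \ph_{\alpha, n}'$ equals $1$ on $(-1, 1)$, $\alpha \abs{x}^{\alpha - 1}$ on $\{1 < \abs{x} < n^4\}$ and $0$ on $\{\abs{x} > n^4\}$, and then carry out a case analysis based on where $s$ and $t$ fall with respect to the breakpoints $\pm 1, \pm n, \pm n^3, \pm n^4$. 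The inequality~\eqref{eq:main} is invariant under $s \leftrightarrow t$ and under $(s, t) \mapsto (-t, -s)$, since all functions involved are odd; combining both symmetries I may assume $-t \le s \le t$ (in particular $t \ge 0$). Setting $X_\beta = \ph_{\beta, n}(t) - \ph_{\beta, n}(s)$, $Y_\beta = \ph_\beta(t) - \ph_\beta(s)$ and $\Delta_\beta = X_\beta - Y_\beta$ for $\beta \in \{\alpha, 2 - \alpha\}$, the elementary identity
\[
 X_\alpha X_{2 - \alpha} - Y_\alpha Y_{2 - \alpha} = \Delta_\alpha X_{2 - \alpha} + Y_\alpha \Delta_{2 - \alpha}
\]
reduces the problem to estimating $\Delta_\beta$; since $\rho_\beta$ and $\ph_\beta'$ agree on the bulk band $\{1 < \abs{x} < n^4\}$, the correction $\Delta_\beta$ depends only on the portions of $[s, t]$ lying in $(-1, 1)$ or in $\{\abs{x} > n^4\}$.

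The case analysis is then organized by the positions of $s, t$ in the partition $(-\infty, -n^4), [-n^4, -1], (-1, 1), [1, n^4], (n^4, \infty)$. Two cases are immediate: (i)~if $s, t \in (-1, 1)$, then all functions reduce to the identity on $[s, t]$, so $A = (t - s)^2 = (\psi_n(t) - \psi_n(s))^2$ and Lemma~\ref{lem:forms} gives $\abs{A - B} \le 2 (\psi_n(t) - \psi_n(s))^2$; (ii)~if both $\abs{s}, \abs{t} \in [1, n^4]$, then $\ph_{\beta, n}$ and $\ph_\beta$ agree at $s$ and $t$, so $\Delta_\beta = 0$ and $A = B$. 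In every remaining "mixed" case, a uniform bound $\abs{\Delta_\beta} \lesssim 1 + \abs{t}^\beta \mathbf{1}_{t > n^4}$ combined with $\abs{X_{2 - \alpha}}, \abs{Y_\alpha} \lesssim \abs{t}^{2 - \alpha} + \abs{t}^\alpha$ gives $\abs{A - B} \lesssim \abs{t}^{2 - \min\{\alpha, 2 - \alpha\}}$ (away from the breakpoints). The right-hand side of~\eqref{eq:main} absorbs this bound in three regimes: when $\abs{t} \le n$, the term $180 (\psi_n(t) - \psi_n(s))^2 = 180 (t - s)^2 \gtrsim t^2$ handles it (using $\abs{s} \le t$); when $n \le \abs{t} \le n^4$, the term $8 n^{-\min\{\alpha, 2 - \alpha\}} (t - s)^2 \gtrsim n^{-\min\{\alpha, 2 - \alpha\}} t^2$ handles it (since $(n / \abs{t})^{\min\{\alpha, 2 - \alpha\}} \le 1$); and when $\abs{t} > n^4$, the inequality $\psi_n(t) \ge t / 2$ makes $180 (\psi_n(t) - \psi_n(s))^2$ large enough once more.

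The main obstacle is a refinement of $\abs{\Delta_\beta}$ near the breakpoints, where the crude estimate $\abs{\Delta_\beta} \lesssim 1$ is too weak to yield an $O((t - s)^2)$ bound on $\abs{A - B}$. For example, when $s$ and $t$ both lie within distance $\varepsilon$ of $\pm 1$, the crude estimate only gives $\abs{A - B} = O(1)$, whereas the right-hand side of~\eqref{eq:main} is of order $\varepsilon^2$. Here one computes $\Delta_\beta$ explicitly through the antiderivative
\[
 H_\beta(y) = \begin{cases} y - y\pow{\beta} & \text{for } \abs{y} \le 1, \\ 0 & \text{for } 1 \le \abs{y} \le n^4, \\ n^{4 \beta} \sign y - y\pow{\beta} & \text{for } \abs{y} \ge n^4, \end{cases}
\]
of $\rho_\beta(x) - \beta \abs{x}^{\beta - 1}$, obtaining $\abs{\Delta_\beta} \lesssim \abs{t - s}$ in the transitional regions; this supplies the missing factor of $\abs{t - s}$ needed to match $(t - s)^2$ on the right-hand side. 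A similar transitional refinement is needed near $\abs{t} = n^4$. Once these sharp local bounds are combined with the coarse ones, the numerical constants $8$ and $180$ in~\eqref{eq:main} follow by tallying the worst contributions in each of the finitely many subregions.
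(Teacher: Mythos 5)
Your plan rests on the same case analysis over the breakpoints $\pm 1, \pm n, \pm n^3, \pm n^4$ that the paper uses, but organizes it differently: you split the error via the identity $A - B = \Delta_\alpha X_{2-\alpha} + Y_\alpha \Delta_{2-\alpha}$ and bound each piece, whereas the paper computes $A - B = \Phi_{\alpha,n}(s,t) - \Phi_\alpha(s,t)$ directly and, for $\abs{s} \le 1 \le t \le n^4$, obtains the exact factorization
\begin{align*}
 \Phi_{\alpha,n}(s,t) - \Phi_\alpha(s,t) = s\pow{\alpha} t^\alpha \bigl(1 - \abs{s}^{1-\alpha}\bigr)\bigl(t^{2-2\alpha} - \abs{s}^{1-\alpha}\bigr),
\end{align*}
from which all the fine estimates in the middle strip follow. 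Both decompositions are valid, but yours passes through a triangle inequality (the two summands $t^\alpha\Delta_{2-\alpha}$ and $t^{2-\alpha}\Delta_\alpha$ have opposite signs for $0 \le s \le 1 \le t$) and thus discards a cancellation that the multiplicative formula keeps. That does not make your route impossible, but it does mean the numerical outcome is not automatic.

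There are two concrete gaps. First, in the transitional strip where $s$ and $t$ are both near $\pm 1$, proving $\abs{\Delta_\beta} \le C\abs{t-s}$ is necessary but not sufficient: the identity pairs $\Delta_\alpha$ with $X_{2-\alpha}$ and $\Delta_{2-\alpha}$ with $Y_\alpha$, so you also need bounds of the form $\abs{X_{2-\alpha}} \le C\abs{t-s}$ and $\abs{Y_\alpha} \le C\abs{t-s}$ there in order to land on $O((t-s)^2)$. This is true, since the relevant functions have bounded derivatives once $\abs{s}, \abs{t}$ are bounded away from $0$, but it is a separate step your text omits. Note also that $H_\beta$ is not globally Lipschitz on $(-1,1)$ (its derivative $1 - \beta\abs{y}^{\beta-1}$ blows up near $0$ when $\beta < 1$), so the bound on $\Delta_\beta$ must use $\abs{t} \ge 1$ and cannot come from a Lipschitz constant of $H_\beta$. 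Second, the lemma asserts the specific constants $8$ and $180$, and ``tallying the worst contributions'' is precisely where the paper's six-step estimate resides; a crude run through your decomposition in the region $\abs{s}\le 1\le n\le t\le n^4$ produces a factor close to $16 n^{-\alpha}$ rather than $8 n^{-\alpha}$ unless the individual estimates for $\Delta_\alpha, \Delta_{2-\alpha}, X_{2-\alpha}, Y_\alpha$ are taken sharply, so the tally must actually be carried out. The strategy would yield an inequality of the required shape with some absolute constants, but as written it does not yet establish the stated one.
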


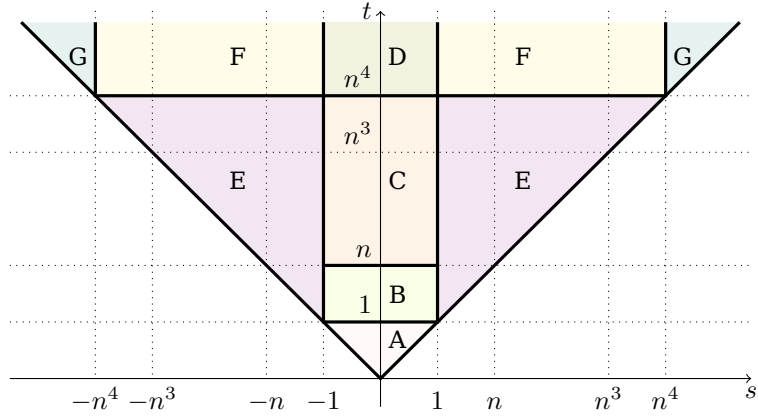
\begin{figure}
\centering
\begin{tikzpicture}[scale=0.75]
\footnotesize
\fill[fill=pink!10!white] (-1,1) -- (0,0) -- (1,1);
\node at (0.3,0.7) {A};
\fill[fill=lime!10!white] (-1,1) -- (1,1) -- (1,2) -- (-1,2);
\node at (0.3,1.5) {B};
\fill[fill=orange!10!white] (-1,2) -- (1,2) -- (1,5) -- (-1,5);
\node at (0.3,3.5) {C};
\fill[fill=olive!10!white] (-1,5) -- (1,5) -- (1,6.3) -- (-1,6.3);
\node at (0.3,5.7) {D};
\fill[fill=violet!10!white] (-1,1) -- (-1,5) -- (-5,5);
\fill[fill=violet!10!white] (1,1) -- (1,5) -- (5,5);
\node at (-2.5,3.5) {E};
\node at (2.5,3.5) {E};
\fill[fill=yellow!10!white] (-1,5) -- (-1,6.3) -- (-5,6.3) -- (-5,5);
\fill[fill=yellow!10!white] (1,5) -- (1,6.3) -- (5,6.3) -- (5,5);
\node at (-2.5,5.7) {F};
\node at (2.5,5.7) {F};
\fill[fill=teal!10!white] (-5,5) -- (-5,6.3) -- (-6.3,6.3);
\fill[fill=teal!10!white] (5,5) -- (5,6.3) -- (6.3,6.3);
\node at (-5.3,5.7) {G};
\node at (5.3,5.7) {G};
\draw[very thick] (-6.3,6.3) -- (0,0) -- (6.3,6.3);
\draw[very thick] (-5,6.3) -- (-5,5) -- (5,5) -- (5,6.3);
\draw[very thick] (-1,6.3) -- (-1,1) -- (1,1) -- (1,6.3);
\draw[very thick] (-1,2) -- (1,2);
\draw[dotted] (1,6.5) -- (1,0) node[below] {$1$\vphantom{$n^4$}};
\draw[dotted] (-1,6.5) -- (-1,0) node[below] {$-1$\vphantom{$n^4$}};
\draw[dotted] (2,6.5) -- (2,0) node[below] {$n$\vphantom{$n^4$}};
\draw[dotted] (-2,6.5) -- (-2,0) node[below] {$-n$\vphantom{$n^4$}};
\draw[dotted] (4,6.5) -- (4,0) node[below] {$n^3$\vphantom{$n^4$}};
\draw[dotted] (-4,6.5) -- (-4,0) node[below] {$-n^3$\vphantom{$n^4$}};
\draw[dotted] (5,6.5) -- (5,0) node[below] {$n^4$\vphantom{$n^4$}};
\draw[dotted] (-5,6.5) -- (-5,0) node[below] {$-n^4$\vphantom{$n^4$}};
\draw[dotted] (-6.5,1) -- (0,1) node[above left] {$1$\vphantom{$n^4$}} -- (6.5,1);
\draw[dotted] (-6.5,2) -- (0,2) node[above left] {$n$\vphantom{$n^4$}} -- (6.5,2);
\draw[dotted] (-6.5,4) -- (0,4) node[above left] {$n^3$\vphantom{$n^4$}} -- (6.5,4);
\draw[dotted] (-6.5,5) -- (0,5) node[above left] {$n^4$\vphantom{$n^4$}} -- (6.5,5);
\draw[->] (-6.5,0) -- (6.5,0) node[below] {$s$};
\draw[->] (0,-0.5) -- (0,6.5) node[left] {$t$};
\end{tikzpicture}
\caption{Regions considered in the proof of Lemma~\ref{lem:main} (not to scale).}
\label{fig:diagram}
\end{figure}

\begin{proof}
We divide the argument into six steps.

\emph{Step 1.} We begin with elementary simplifications. By symmetry, with no loss of generality we may assume that $\abs{s} \le \abs{t}$, and since $\ph_\alpha$, $\ph_{\alpha, n}$, and $\psi_n$ are odd functions, we may additionally assume that $t \ge 0$. Thus, it is sufficient to consider $s, t \in \R$ such that $-t \le s \le t$ (see Figure~\ref{fig:diagram}). Furthermore, the statement of the lemma does not change when $\alpha$ is replaced by $2 - \alpha$, and therefore we may restrict our attention to $\alpha \in (0, 1]$. We maintain these assumptions throughout the proof.

To simplify the notation, we denote
\begin{align*}
 \Phi_\alpha(s, t) & = (\ph_\alpha(t) - \ph_\alpha(s))(\ph_{2 - \alpha}(t) - \ph_{2 - \alpha}(s)) , \\
 \Phi_{\alpha, n}(s, t) & = (\ph_{\alpha, n}(t) - \ph_{\alpha, n}(s)) (\ph_{2 - \alpha, n}(t) - \ph_{2 - \alpha, n}(s)) , \\
 \Psi_n(s, t) & = (\psi_n(t) - \psi_n(s))^2 .
\end{align*}
Thus, the desired inequality~\eqref{eq:main} can be written as
\begin{align}
\label{eq:main:alt}
 \abs{\Phi_{\alpha, n}(s, t) - \Phi_\alpha(s, t)} & \le \eps_{\alpha, n} (t - s)^2 + c \Psi_n(s, t) ,
\end{align}
where $c = 180$ and $\eps_{\alpha, n} = 8 n^{-\alpha}$. We split the region $\abs{s} \le t$ into a number of subregions, as shown in Figure~\ref{fig:diagram}.

We first gather the necessary estimates of $\Phi_\alpha$ and $\Phi_{\alpha, n}$, then we estimate $\Psi_n$, and only then we return to the actual proof of~\eqref{eq:main:alt}.

\emph{Step 2.} We have the following immediate estimates of $\Phi_\alpha$ and $\Phi_{\alpha, n}$. By definition,
\begin{align}
\label{eq:main:3}
 \Phi_{\alpha, n}(s, t) & = \Phi_\alpha(s, t) && \text{if } \underbrace{1 \le \abs{s} \le t \le n^4}_{\text{region E}} ,
\end{align}
and
\begin{align}
\label{eq:main:4}
 \Phi_{\alpha, n}(s, t) & = (s - t)^2 && \text{if } \underbrace{\abs{s} \le t \le 1}_{\text{region A}} .
\end{align}
Lemma~\ref{lem:forms} implies that
\begin{align}
\label{eq:main:1}
 0 \le \Phi_\alpha(s, t) & \le 2 (t - s)^2 && \text{if } \underbrace{\abs{s} \le t}_{\text{all regions}} .
\end{align}

\emph{Step 3.}
We turn to an estimate for $\Phi_{\alpha, n}(s, t)$ similar to~\eqref{eq:main:1} in all regions but~A and~B. If $1 \le \abs{s} \le t$, then
\begin{align*}
 0 & \le \ph_{\alpha, n}(t) - \ph_{\alpha, n}(s) \le \ph_\alpha(t) - \ph_\alpha(s) , \\
 0 & \le \ph_{2 - \alpha, n}(t) - \ph_{2 - \alpha, n}(s) \le \ph_{2 - \alpha}(t) - \ph_{2 - \alpha}(s) ,
\end{align*}
and hence
\begin{align*}
 0 & \le \Phi_{\alpha, n}(s, t) \le \Phi_\alpha(s, t) .
\end{align*}
Combining this with~\eqref{eq:main:1}, we arrive at
\begin{align}
\label{eq:main:2}
 0 & \le \Phi_{\alpha, n}(s, t) \le 2 (t - s)^2 && \text{if } \underbrace{1 \le \abs{s} \le t}_{\text{regions E, F, G}} .
\end{align}
We now show that a similar estimate is valid also when $\abs{s} \le 1$ and $n \le t$. In this case we have
\begin{align*}
 0 & \le \ph_{\alpha, n}(t) - \ph_{\alpha, n}(s) \le \ph_{\alpha, n}(t) - \ph_{\alpha, n}(-1) , \\
 0 & \le \ph_{2 - \alpha, n}(t) - \ph_{2 - \alpha, n}(s) \le \ph_{2 - \alpha, n}(t) - \ph_{2 - \alpha, n}(-1) ,
\end{align*}
and hence
\begin{align*}
 0 & \le \Phi_{\alpha, n}(s, t) \le \Phi_{\alpha, n}(-1, t) .
\end{align*}
By~\eqref{eq:main:2} applied with $s = -1$, we find that
\begin{align*}
 \Phi_{\alpha, n}(-1, t) & \le 2 (t + 1)^2 .
\end{align*}
Finally, since $t \ge n$ and $n \ge 2$, we have
\begin{align*}
 t + 1 & \le 3 t - 2 n + 1 \\
 & \le 3 (t - 1) \\
 & \le 3 (t - s) .
\end{align*}
Together the above estimates lead to
\begin{align*}
 \Phi_{\alpha, n}(s, t) & \le \Phi_{\alpha, n}(-1, t) \\
 & \le 2 (t + 1)^2 \\
 & \le 18 (t - s)^2 .
\end{align*}
Thus, we have the following analogue of~\eqref{eq:main:2}:
\begin{align}
\label{eq:main:6}
 0 & \le \Phi_{\alpha, n}(s, t) \le 18 (t - s)^2 && \text{if } \underbrace{\abs{s} \le 1 \le n \le t}_{\text{regions C, D}} .
\end{align}

\emph{Step 4.} A more refined estimate of $\Phi_{\alpha, n}(s, t) - \Phi_\alpha(s, t)$ is needed in regions~B and~C, when $\abs{s} \le 1 \le t \le n^4$. Observe that
\begin{align*}
 \Phi_{\alpha, n}(s, t) - \Phi_\alpha(s, t) & = (\ph_{\alpha, n}(t) - \ph_{\alpha, n}(s)) (\ph_{2 - \alpha, n}(t) - \ph_{2 - \alpha, n}(s)) \\
 & \qquad\qquad - (\ph_\alpha(t) - \ph_\alpha(s)) (\ph_{2 - \alpha}(t) - \ph_{2 - \alpha}(s)) \\
 & = (t^\alpha - s) (t^{2 - \alpha} - s) - (t^\alpha - s\pow{\alpha}) (t^{2 - \alpha} - s\pow{2 - \alpha}) \\
 & = (t^2 - t^\alpha s - t^{2 - \alpha} s + s^2) - (t^2 - t^\alpha s\pow{2 - \alpha} - t^{2 - \alpha} s\pow{\alpha} + s^2) \\
 & = t^\alpha s\pow{2 - \alpha} + t^{2 - \alpha} s\pow{\alpha} - t^\alpha s - t^{2 - \alpha} s \\
 & = s\pow{\alpha} t^\alpha (1 - \abs{s}^{1 - \alpha}) (t^{2 - 2 \alpha} - \abs{s}^{1 - \alpha}) .
\end{align*}
Therefore,
\begin{align}
\label{eq:main:10}
 \abs{\Phi_{\alpha, n}(s, t) - \Phi_\alpha(s, t)} & = \abs{s}^\alpha t^\alpha \bigl|1 - \abs{s}^{1 - \alpha}\bigr| \bigl|t^{2 - 2 \alpha} - \abs{s}^{1 - \alpha}\bigr|  && \text{if } \underbrace{\abs{s} \le t}_{\text{all regions}} .
\end{align}
Suppose that $\abs{s} \le 1$ and $n \le t$. Using~\eqref{eq:main:10}, we find that
\begin{align*}
 \abs{\Phi_{\alpha, n}(s, t) - \Phi_\alpha(s, t)} & \le t^\alpha (t^{2 - 2 \alpha} + 1) \le 2 t^{2 - \alpha} \le 2 n^{-\alpha} t^2 .
\end{align*}
Since $t \le 2 (t - 1) \le 2 (t - s)$, we arrive at
\begin{align}
\label{eq:main:8}
 \abs{\Phi_{\alpha, n}(s, t) - \Phi_\alpha(s, t)} & \le 8 n^{-\alpha} (t - s)^2 && \text{if } \underbrace{\abs{s} \le 1 \le n \le t \le n^4}_{\text{region C}} .
\end{align}
On the other hand, if $\abs{s} \le 1 \le t \le n$, then, again by~\eqref{eq:main:10},
\begin{align*}
 \abs{\Phi_{\alpha, n}(s, t) - \Phi_\alpha(s, t)} & \le t^\alpha (1 - \abs{s}^{1 - \alpha}) (t^{2 - 2 \alpha} - \abs{s}^{1 - \alpha}) \\
 & = t^\alpha (1 - \abs{s}^{1 - \alpha}) ((t^{2 - 2 \alpha} - 1) + (1 - \abs{s}^{1 - \alpha})) .
\end{align*}
We combine this estimate with $1 - \abs{s}^{1 - \alpha} \le 1 - s$ and with
\begin{align*}
 t^{2 - 2 \alpha} - 1 & = (t^{1 - \alpha} + 1) (t^{1 - \alpha} - 1) \le 2 t^{1 - \alpha} (t - 1) ,
\end{align*}
to find that
\begin{align*}
 \abs{\Phi_{\alpha, n}(s, t) - \Phi_\alpha(s, t)} & \le t^\alpha (1 - s) (2 t^{1 - \alpha} (t - 1) + (1 - s)) \\
 & \le t^\alpha (1 - s) (2 t^{1 - \alpha} (t - 1) + 2 t^{1 - \alpha} (1 - s)) \\
 & = 2 t (1 - s) (t - s) .
\end{align*}
Finally, $t (1 - s) = t - s t \le t - s$ if $s \ge 0$, and $t (1 - s) \le 2 t \le 2 (t - s)$ if $s \le 0$. Thus,
\begin{align}
\label{eq:main:9}
 \abs{\Phi_{\alpha, n}(s, t) - \Phi_\alpha(s, t)} & \le 4 (t - s)^2 && \text{if } \underbrace{\abs{s} \le 1 \le t \le n}_{\text{region B}} .
\end{align}

\emph{Step 5.} We turn to the estimates of $\Psi_n$. By definition,
\begin{align}
\label{eq:main:5}
 \Psi_n(s, t) & = (t - s)^2 && \text{if } \underbrace{\abs{s} \le t \le n}_{\text{includes regions A, B}} \text{ or } \underbrace{n^3 \le \abs{s} \le t}_{\text{includes region G}} .
\end{align}
Suppose that $\abs{s} \le n^3$ and $n^4 \le t$. Then
\begin{align*}
 \psi_n(t) - \psi_n(s) & \ge \psi_n(t) - n = t - n^3 .
\end{align*}
Since $t \ge n^4 \ge 2 n^3$, we have
\begin{align*}
 3 (\psi_n(t) - \psi_n(s)) & \ge 3 t - 3 n^3 \\
 & \ge t + n^3 \\
 & \ge t - s .
\end{align*}
It follows that
\begin{align*}
 9 \Psi_n(s, t) & = 9 (\psi_n(t) - \psi_n(s))^2 \ge (t - s)^2 .
\end{align*}
This estimate partially covers regions~D and~F, and the remaining part of these regions is included in~\eqref{eq:main:5}. Thus,
\begin{align}
\label{eq:main:7}
 9 \Psi_n(s, t) & \ge (t - s)^2 && \text{if } \underbrace{\abs{s} \le n^4 \le t}_{\text{regions D, F}} .
\end{align}

\emph{Step 6.} With the above bounds at hand, we are ready to prove~\eqref{eq:main:alt}. We consider the following seven cases, which correspond to regions shown in Figure~\ref{fig:diagram}.
\begin{itemize}
\item Region A: If $\abs{s} \le t \le 1$, then, by~\eqref{eq:main:4}, \eqref{eq:main:1} and~\eqref{eq:main:5},
\begin{align*}
 \abs{\Phi_{\alpha, n}(s, t) - \Phi_\alpha(s, t)} & \le \Phi_{\alpha, n}(s, t) + \Phi_\alpha(s, t) \\
 & \le 3 (t - s)^2 \\
 & = 3 \Psi_n(s, t) .
\end{align*}
\item Region B: If $\abs{s} \le 1 \le t \le n$, then, by~\eqref{eq:main:9} and~\eqref{eq:main:5}, we have
\begin{align*}
 \abs{\Phi_{\alpha, n}(s, t) - \Phi_\alpha(s, t)} & \le 4 (t - s)^2 \\
 & = 4 \Psi_n(s, t) .
\end{align*}
\item Region C: If $\abs{s} \le 1 \le n \le t \le n^4$, then, by~\eqref{eq:main:8}, we have
\begin{align*}
 \abs{\Phi_{\alpha, n}(s, t) - \Phi_\alpha(s, t)} & \le 8 n^{-\alpha} (t - s)^2 .
\end{align*}
\item Region D: If $\abs{s} \le 1 \le n^4 \le t$, then, by~\eqref{eq:main:1}, \eqref{eq:main:6} and~\eqref{eq:main:7} we have
\begin{align*}
 \abs{\Phi_{\alpha, n}(s, t) - \Phi_\alpha(s, t)} & \le \Phi_{\alpha, n}(s, t) + \Phi_\alpha(s, t) \\
 & \le 20 (t - s)^2 \\
 & \le 180 \Psi_n(s, t) .
\end{align*}
\item Region E: If $1 \le \abs{s} \le t \le n^4$, then, by~\eqref{eq:main:3} we have
\begin{align*}
 \abs{\Phi_{\alpha, n}(s, t) - \Phi_\alpha(s, t)} & = 0 .
\end{align*}
\item Region F: If $1 \le \abs{s} \le n^4 \le t$, then, by~\eqref{eq:main:1}, \eqref{eq:main:2} and~\eqref{eq:main:7} we have
\begin{align*}
 \abs{\Phi_{\alpha, n}(s, t) - \Phi_\alpha(s, t)} & \le \Phi_{\alpha, n}(s, t) + \Phi_\alpha(s, t) \\
 & \le 4 (t - s)^2 \\
 & \le 36 \Psi_n(s, t) .
\end{align*}
\item Region G: If $n^4 \le \abs{s} \le t$, then, by~\eqref{eq:main:1}, \eqref{eq:main:2} and~\eqref{eq:main:5} we have
\begin{align*}
 \abs{\Phi_{\alpha, n}(s, t) - \Phi_\alpha(s, t)} & \le \Phi_{\alpha, n}(s, t) + \Phi_\alpha(s, t) \\
 & \le 4 (t - s)^2 \\
 & = 4 \Psi_n(s, t) .
\end{align*}
\end{itemize}
The proof is complete.
\end{proof}

%
%                            ---------- o ----------
%

\section{Dirichlet forms and the corresponding Sobolev--Bregman forms}
\label{sec:pre}

With Lemma~\ref{lem:main} at hand, we are ready to prove our main result. First, however, we recall the definitions of the Dirichlet form $\E$ and the corresponding Sobolev--Bregman forms $\E_p$, and their basic properties. This expands on the brief introduction in Section~\ref{sec:intro}, reiterating key points for the reader's convenience.

Recall that $E$ is a locally compact, separable metric space, and $m$ is a reference measure: a Radon measure on $E$ with full support. By `equal almost everywhere', we mean equality up to a set of zero measure $m$, and $L^p(E)$ denotes the space of (real) Borel functions $u$ with finite norm $\norm{u}_p = (\int_E \abs{u(x)}^p m(dx))^{1/p}$, where functions equal almost everywhere have been identified.

For the remainder of the paper, we assume that $\E$ is a regular Dirichlet form with domain $\D(\E) \sub L^2(E)$. Recall that a symmetric bilinear form $\E$ is a Dirichlet form if it is closed and Markovian, and it is a regular Dirichlet form if additionally the set of continuous, compactly supported functions in $\D(\E)$ forms a core for $\E$. Recall also that a symmetric bilinear form $\E$ is Markovian if it has the following property: if $u \in \D(\E)$ and $v$ is a \emph{normal contraction} of $u$ (that is, $\abs{v(x)} \le \abs{u(x)}$ and $\abs{v(x) - v(y)} \le \abs{u(x) - u(y)}$ for all $x, y \in E$), then $v \in \D(\E)$ and $\E(v, v) \le \E(u, u)$. We refer to Section~1.1 in~\cite{fot} for a detailed discussion.

While functions in the domain $\D(\E)$ need not be continuous, there is a weaker notion of \emph{quasi-continuity} associated to the regular Dirichlet form $\E$; see Section~2.1 in~\cite{fot}. Throughout this paper, whenever we write \emph{quasi-continuous}, we mean a property called \emph{quasi-continuity in the restricted sense} in~\cite{fot}, that is, quasi-continuity on the one-point compactification of $E$. It is known that every $u \in \D(\E)$ is equal almost everywhere to a quasi-continuous function $\tilde{u}$, called the \emph{quasi-continuous modification} of $u$ (Theorem~2.1.3 in~\cite{fot}).

The Beurling--Deny formula~\eqref{eq:form} holds for every $u, v \in \D(\E)$, provided that we choose quasi-continuous modifications. That is, \eqref{eq:form} should be formally written as
\begin{align}
\notag
 \E(u, v) & = \El(u, v) \\
\label{eq:form:quasi}
 & \qquad + \frac{1}{2} \iint\limits_{(E \times E) \setminus \Delta} (\tilde{u}(y) - \tilde{u}(x)) (\tilde{v}(y) - \tilde{v}(x)) J(dx, dy) \\
\notag
 & \qquad\qquad + \int_E \tilde{u}(x) \tilde{v}(x) k(dx) ,
\end{align}
where $u, v \in \D(\E)$ (Lemma~4.5.4 and Theorem~4.5.2 in~\cite{fot}). The Dirichlet form $\E$ is said to be \emph{pure-jump} if $\El(u, v) = 0$. Note that our definition of the jumping measure includes a constant $\tfrac{1}{2}$ in~\eqref{eq:form:quasi}; that is, we write $\tfrac{1}{2} J(dx, dy)$ for what is denoted in~\cite{fot} by $J(dx, dy)$. This is motivated by the probabilistic interpretation of the jumping measure (see formula~(5.3.6) in~\cite{fot}), and it agrees with the notation used by various other authors.

Every $u \in \D(\E)$ has a quasi-continuous modification $\tilde{u}$ such that both integrals in
\begin{align}
\label{eq:form:quasi:quadratic}
 \E(u) & = \El(u) + \frac{1}{2} \iint\limits_{(E \times E) \setminus \Delta} (\tilde{u}(y) - \tilde{u}(x))^2 J(dx, dy) + \int_E (\tilde{u}(x))^2 k(dx)
\end{align}
are finite. We say that the form $\E$ is \emph{maximally defined} if the converse is true: every $u \in L^2(E)$ which has a quasi-continuous modification $\tilde{u}$ such that the integrals on the right-hand side of~\eqref{eq:form:quasi:quadratic} are finite, belongs to $\D(\E)$.

Many commonly used pure-jump Dirichlet forms are maximally defined; see~\cite{su}. In fact, we are not aware of any example of a regular pure-jump Dirichlet form which is not maximally defined. On the other hand, if the strongly local part $\El$ is nonvanishing, $\E$ is typically not maximally defined. \emph{Reflected Dirichlet forms} and \emph{Silverstein extensions} (see~\cite{chen,kuwae}) seem to be closely related concepts.

The strongly local part $\El$ is a symmetric Markovian form defined on the domain $\D(\E)$ of the original Dirichlet form, and $\El(u, v) = 0$ whenever $v$ is constant on a neighbourhood of the support of $u$ (formula~(4.5.14) in~\cite{fot}). If $\El$ is not identically zero, we define $\D(\El)$ to be equal to $\D(\E)$, even if $\El$ can be extended to a larger class of functions. If, however, $\El(u, v) = 0$ for every $u, v \in \D(\E)$, then we extend this equality to all $u, v \in L^2(E)$, and we write $\D(\El) = L^2(E)$. In this case $\E$ is said to be a \emph{pure-jump} (or \emph{purely nonlocal}) Dirichlet form. 

No explicit description of $\El$ is available in the general setting (see, however, Section~\ref{sec:example} for the discussion of the Euclidean case). Nevertheless, in many aspects the form $\El$ is well-understood. In particular, LeJan's formulae state that whenever $u \in \D(\El)$, there is a finite measure $\mul_u$ on $E$, called the \emph{energy measure} of $u$, such that
\begin{align}
\label{eq:form:local}
 \El(\ph(u), \psi(u)) & = \frac{1}{2} \int_E \ph'(u(x)) \psi'(u(x)) \mul_u(dx)
\end{align}
for all Lipschitz functions $\ph, \psi$ on $\R$ such that $\ph(0) = \psi(0) = 0$ (see Theorem~3.2.2 and footnote~8 in~\cite{fot} or Théorème~3.1 in~\cite{bh}). Here $\ph'$ and $\psi'$ denote the derivatives of $\ph$ and $\psi$ whenever they exist, extended arbitrarily to Borel functions on all of $\R$.

Recall that $T_t$ denotes the Markovian semigroup associated with the Dirichlet form $\E$; the relation between the form $\E$ and the semigroup $T_t$ is given by~\eqref{eq:form:heat}. For $u \in L^2(E)$, we have
\begin{align}
\label{eq:heat}
 T_t u(x) & = \int_E u(y) T_t(x, dy)
\end{align}
(with equality almost everywhere) for an appropriate kernel $T_t(x, dy)$, and we sometimes write $T_t(dx, dy) = T_t(x, dy) m(dx)$. The semigroup $T_t$ is Markovian if its kernel $T_t(x, dy)$ is sub-probabilistic, meaning it is nonnegative and satisfies $T_t(x, E) \le 1$ for almost every $x \in E$. The operators $T_t$ are self-adjoint, and hence the kernel $T_t(x, dy)$ is symmetric, that is, $T_t(dx, dy) = T_t(dy, dx)$. For $p \in [1, \infty]$, formula~\eqref{eq:heat} extends the definition of $T_t u$ to arbitrary $u \in L^p(E)$, and by Jensen's inequality and Fubini's theorem we have $\norm{T_t u}_p \le \norm{u}_p$. In other words, $T_t$ are contractions on $L^p(E)$. We refer to Section~1.4 in~\cite{fot} for further discussion.

For $p \in (1, \infty)$, we define the Sobolev--Bregman form $\E_p$ corresponding to $\E$ by~\eqref{eq:pform:heat}; that is, we set
\begin{align*}
 \E_p(u) & = \lim_{t \to 0^+} \frac{1}{t} \int_E (u(x) - T_t u(x)) u\pow{p - 1}(x) m(dx)
\end{align*}
whenever the finite limit exists, and in this case we write $u \in \D(\E_p)$. We denote the right-hand side of the Beurling--Deny formula~\eqref{eq:pform} for $\E_p$ by $\Ea_p$, so that the core of Theorem~\ref{thm:main} can be rephrased as follows: if $u \in \D(\E_p)$ and $\tilde{u}$ is the quasi-continuous modification of $u$, then $\E_p(u) = \Ea_p(\tilde{u})$. More precisely, we define
\begin{align}
\label{eq:pform:alt}
 \Ea_p(u) & = \Eal_p(u) + \Eaj_p(u) + \Eak_p(u) ,
\end{align}
where $\Eal_p$, $\Eaj_p$ and $\Eak_p$ correspond to the strongly local part, the purely nonlocal part, and the killing part in~\eqref{eq:pform}, respectively:
\begin{align*}
 \Eal_p(u) & = \frac{4 (p - 1)}{p^2} \, \El(u\pow{p/2}) , \\
 \Eaj_p(u) & = \frac{1}{2} \iint\limits_{(E \times E) \setminus \Delta} (u(y) - u(x)) (u\pow{p - 1}(y) - u\pow{p - 1}(x)) J(dx, dy) , \\
 \Eak_p(u) & = \int_E \abs{u(x)}^p k(dx) .
\end{align*}
Note that $\Eaj_p(u)$ and $\Eak_p(u)$ are well-defined (possibly infinite) for an arbitrary Borel function $u$, because the integrands on the right-hand sides are nonnegative. We stress that here we cannot identify functions $u$ equal almost everywhere, because $k$ and $J$ can charge sets of zero measure $m$ or $m \times m$. The strongly local part $\Eal_p(u)$ is defined whenever $u\pow{p/2} \in \D(\El)$. We do not specify the domain of $\Ea_p$; in particular, we never use the symbol $\D(\Ea_p)$ below.

For $p = 2$, we recover the original Dirichlet form: $\E(u) = \E_2(u) = \Ea_2(\tilde{u})$ whenever $u \in \D(\E)$ and $\tilde{u}$ is the quasi-continuous modification of $u$.

For $p \in (1, \infty)$ and $u \in L^p(E)$, we define the approximate Sobolev--Bregman form
\begin{align*}
 \E_p^{(t)}(u) & = \frac{1}{t} \int_E (u(x) - T_t u(x)) u\pow{p - 1}(x) m(dx) .
\end{align*}
Note that this is the expression under the limit in the definition~\eqref{eq:pform:heat} of $\E_p(u)$.

If $u \in L^p(E)$, then $T_t u \in L^p(E)$, and hence the above integral is well-defined and finite (by Hölder's inequality). Let $\ind$ denote the constant function defined by $\ind(x) = 1$ for $x \in E$. Since $T_t$ is Markovian, we have $0 \le T_t \ind \le 1$ almost everywhere. Clearly,
\begin{align*}
 \E_p^{(t)}(u) & = \frac{1}{t} \int_E (u(x) T_t \ind(x) - T_t u(x)) u\pow{p - 1}(x) m(dx) \\
 & \qquad\qquad + \frac{1}{t} \int_E (1 - T_t \ind(x)) \abs{u(x)}^p m(dx) \\
 & = \frac{1}{t} \int_E \biggl( \int_E (u(x) - u(y)) u\pow{p - 1}(x) T_t(x, dy) \biggr) m(dx) \\
 & \qquad\qquad + \frac{1}{t} \int_E (1 - T_t \ind(x)) \abs{u(x)}^p m(dx)
\end{align*}
By Young's inequality, $(\abs{u(x)} + \abs{u(y)}) \abs{u(x)}^{p - 1}$ is integrable with respect to $T_t(dx, dy) = T_t(x, dy) m(dx)$, and hence
\begin{align*}
 \E_p^{(t)}(u) & = \frac{1}{t} \iint\limits_{E \times E} (u(x) - u(y)) u\pow{p - 1}(x) T_t(dx, dy) \\
 & \qquad\qquad + \frac{1}{t} \int_E (1 - T_t \ind(x)) \abs{u(x)}^p m(dx) .
\end{align*}
Using the symmetry of $T_t$, we find that
\begin{align}
\label{eq:pform:approx}
 \begin{aligned}
 \E_p^{(t)}(u) & = \frac{1}{2 t} \iint\limits_{E \times E} (u(x) - u(y)) (u\pow{p - 1}(x) - u\pow{p - 1}(y)) T_t(dx, dy) \\
 & \qquad\qquad + \frac{1}{t} \int_E (1 - T_t \ind(x)) \abs{u(x)}^p m(dx) ,
 \end{aligned}
\end{align}
and in particular both integrals on the right-hand side are finite.

We recall that for $p = 2$, the spectral theorem implies that for an arbitrary $u \in L^2(E)$, the approximate Dirichlet form $\E_2^{(t)}(u)$ is a nonincreasing function of $t \in (0, \infty)$, and the limit of $\E_2^{(t)}(u)$ as $t \to 0^+$ is equal to $\E(u)$, whether finite or not (see Lemma~1.3.4 in~\cite{fot}). If $\E(u) < \infty$, then we have already noted that $u$ has a quasi-continuous modification $\tilde{u}$, and the Beurling--Deny formula says that $\E(u) = \Ea_2(\tilde{u})$, with $\Ea_2$ defined in~\eqref{eq:pform:alt}.

For $p = 2$, we will also need the approximate bilinear form $\E_2^{(t)}(u, v)$, defined by
\begin{align*}
 \E_2^{(t)}(u, v) & = \frac{1}{2 t} \iint\limits_{E \times E} (u(y) - u(x)) (v(y) - v(x)) T_t(dx, dy) \\
 & \qquad\qquad + \frac{1}{t} \int_E u(x) v(x) (1 - T_t \ind(x)) m(dx) .
\end{align*}
By Lemma~1.3.4 in~\cite{fot}, $\E_2^{(t)}(u, v)$ converges to $\E(u, v)$ as $t \to 0^+$ whenever $u, v \in \D(\E)$.

This last property shows that the Dirichlet form can be approximated using the kernel $T_t(dx, dy)$. We will need a more detailed result. By Lemmas~4.5.2 and~4.5.3 in~\cite{fot} (see also equation~(1.4) in~\cite{cf}), for $u, v \in \D(\E)$ we have
\begin{align}
\label{eq:killing:approx}
 \lim_{t \to 0^+} \frac{1}{t} \int_E u(x) v(x) (1 - T_t \ind(x)) m(dx) & = \int_E \tilde{u}(x) \tilde{v}(x) k(dx) ,
\end{align}
so that the killing part can be recovered from the kernel $T_t(dx, dy)$. Combining this with the convergence of $\E_2^{(t)}(u, v)$ to $\E(u, v)$ and the expressions~\eqref{eq:pform:approx} for $\E_2^{(t)}(u, v)$ and~\eqref{eq:form:quasi} for $\E(u, v)$, we find that
\begin{align}
\label{eq:jump:approx}
 \begin{aligned}
 & \lim_{t \to 0^+} \frac{1}{2 t} \iint\limits_{E \times E} (u(y) - u(x)) (v(y) - v(x)) T_t(dx, dy) \\
 & \qquad\qquad = \El(u, v) + \frac{1}{2} \iint\limits_{(E \times E) \setminus \Delta} (\tilde{u}(y) - \tilde{u}(x)) (\tilde{v}(y) - \tilde{v}(x)) J(dx, dy) .
 \end{aligned}
\end{align}
The above identity expresses the form $\E$ with the killing part removed (the \emph{resurrected form}) in terms of the kernel $T_t(dx, dy)$.

We conclude this section with the following observation. If $u \in \D(\E)$, $C > 0$, and $v$ is a normal contraction of $C u$ (that is, $\abs{v(x)} \le C \abs{u(x)}$ and $\abs{v(x) - v(y)} \le C \abs{u(x) - u(y)}$), then $v \in \D(\E)$ and $\E(v, v) \le C^2 \E(u, u)$. In particular, if $v(x) = \ph(u(x))$ for a Lipschitz function $\ph$ such that $\ph(0) = 0$, then $v \in \D(\E)$.

The properties listed above will often be used without further comments.

%
%                            ---------- o ----------
%

\section{Proof of the main result}
\label{sec:proof}

For convenience, we divide the proof of Theorem~\ref{thm:main} into three parts.

\begin{proof}[Proof of Theorem~\ref{thm:main}, part I]
We first prove the final claim of the theorem, with a minor modification: we replace the condition $u \in \D(\E_p)$ by $u\pow{p/2} \in \D(\E)$. Specifically, we prove two statements: (a)~if $u\pow{p/2} \in \D(\E)$, then $u$ has a quasi-continuous modification $\tilde{u}$ such that the integrals in~\eqref{eq:pform} are finite; (b)~conversely, if $\E$ is maximally defined, $u$ has a quasi-continuous modification $\tilde{u}$ and the integrals in~\eqref{eq:pform} are finite, then $u\pow{p/2} \in \D(\E)$. In the next two parts of the proof, we show that the two conditions $u \in \D(\E_p)$ and $u\pow{p/2} \in \D(\E)$ are equivalent, and this completes the proof of the final part of the theorem.

Let $u$ be a Borel function. Observe that $u \in L^p(E)$ if and only if $u\pow{p/2} \in L^2(E)$, and $u$ is quasi-continuous if and only if $u\pow{p/2}$ is quasi-continuous. By Lemma~\ref{lem:forms}, applied to $s = u\pow{p/2}(x)$, $t = u\pow{p/2}(y)$ and $\alpha = \frac{2}{p}$, we have
\begin{align}
\label{eq:forms:u}
 \begin{aligned}
 & \frac{4 (p - 1)}{p^2} \, (u\pow{p/2}(y) - u\pow{p/2}(x))^2 \\
 & \qquad\qquad \le (u(y) - u(x)) (u\pow{p - 1}(y) - u\pow{p - 1}(x)) \\
 & \qquad\qquad\qquad\qquad \le 2 (u\pow{p/2}(y) - u\pow{p/2}(x))^2 .
 \end{aligned}
\end{align}
Integrating both sides with respect to $J(dx, dy)$, we find that
\begin{align*}
 \frac{4 (p - 1)}{p^2} \, \Eaj_2(u\pow{p/2}) \le \Eaj_p(u) & \le 2 \Eaj_2(u\pow{p/2}) .
\end{align*}
In particular, $\Eaj_p(u)$ is finite if and only if $\Eaj_2(u\pow{p/2})$ is finite. Furthermore, by definition,
\begin{align*}
 \Eal_p(u) & = \frac{4 (p - 1)}{p^2} \, \Eal_2(u\pow{p/2}) , \\
 \Eak_p(u) & = \Eak_2(u\pow{p/2}) .
\end{align*}
Recall that $\Ea_p(u) = \Eal_p(u) + \Eaj_p(u) + \Eak_p(u)$.

Suppose now that $u\pow{p/2} \in \D(\E)$. Then $u\pow{p/2}$ has a quasi-continuous modification $\tilde{u}\pow{p/2}$, and $\Ea_2(\tilde{u}\pow{p/2}) = \E(u\pow{p/2})$ is finite. This implies that $\tilde{u}$ is a quasi-continuous modification of $u$, and the integrals $\Eaj_p(\tilde{u}) \le 2 \Eaj_2(\tilde{u}\pow{p/2})$ and $\Eak_p(\tilde{u}) = \Eak_2(\tilde{u}\pow{p/2})$ are finite. We have thus proved that the integrals in~\eqref{eq:pform} are finite, as desired.

Conversely, suppose that $\E$ is maximally defined, $u$ has a quasi-continuous modification $\tilde{u}$ and the integrals in~\eqref{eq:pform} are finite; that is, $\Eaj_p(\tilde{u})$ and $\Eak_p(\tilde{u})$ are finite. Then we find that $\Eaj_2(\tilde{u}\pow{p/2}) \le (p^2 / (4 (p - 1))) \Eaj_p(\tilde{u})$ and $\Eak_2(\tilde{u}\pow{p/2}) = \Eak_p(\tilde{u})$ are finite. By our maximality assumption on $\D(\E)$, this implies that $\tilde{u}\pow{p/2} \in \D(\E)$, and hence $u\pow{p/2} \in \D(\E)$.
\end{proof}

\begin{proof}[Proof of Theorem~\ref{thm:main}, part II]
We now prove that if $u \in \D(\E_p)$, then $u\pow{p/2} \in \D(\E)$, and that $\E_p(u)$ is comparable with $\E(u\pow{p/2})$.

Let $u \in L^p(E)$. As in part~I of the proof, by~\eqref{eq:forms:u} and~\eqref{eq:pform:approx} we have
\begin{align*}
 \frac{4 (p - 1)}{p^2} \, \E_2^{(t)}(u\pow{p/2}) \le \E_p^{(t)}(u) & \le 2 \E_2^{(t)}(u\pow{p/2}) .
\end{align*}
Suppose that $u \in \D(\E_p)$, that is, a finite limit of $\E_p^{(t)}(u)$ as $t \to 0^+$ exists. The above estimate implies that $\E_2^{(t)}(u\pow{p/2})$ is bounded as $t \to 0^+$. Since $\E_2^{(t)}(u\pow{p/2})$ is a nonincreasing function of $t \in (0, \infty)$, we conclude that a finite limit of $\E_2^{(t)}(u\pow{p/2})$ as $t \to 0^+$ exists, and hence $u\pow{p/2} \in \D(\E)$. Additionally, \eqref{eq:pform:comparability} follows: we have
\begin{align*}
 \frac{4 (p - 1)}{p^2} \, \E_2(u\pow{p/2}) \le \E_p(u) & \le 2 \E_2(u\pow{p/2}) . \qedhere
\end{align*}
\end{proof}

\begin{proof}[Proof of Theorem~\ref{thm:main}, part III]
In this final part we show that if $u\pow{p/2} \in \D(\E)$, then $u \in \D(\E_p)$ and $\E_p(u)$ is given by the Beurling--Deny formula~\eqref{eq:pform}.

Let $u \in L^p(E)$, and suppose that $u\pow{p/2} \in \D(\E)$. In this case the function $u\pow{p/2}$ has a quasi-continuous modification, and so also $u$ has a quasi-continuous modification. For simplicity, throughout the proof we denote this modification again by $u$. Our goal is to show that $u \in \D(\E_p)$ and $\E_p(u) = \Ea_p(u)$. Recall that the Sobolev--Bregman form $\E_p(u)$ is the limit of the approximate forms $\E_p^{(t)}(u)$ as $t \to 0^+$, and that $\E_p^{(t)}(u)$ is given by~\eqref{eq:pform:approx}. On the other hand, $\Ea_p(u) = \Eal_p(u) + \Eaj_p(u) + \Eak_p(u)$. Thus, we need to prove that
\begin{align*}
 & \lim_{t \to 0^+} \biggl( \frac{1}{2 t} \iint\limits_{E \times E} (u(y) - u(x)) (u\pow{p - 1}(y) - u\pow{p - 1}(x)) T_t(dx, dy) \\
 & \qquad\qquad\qquad\qquad\qquad\qquad\qquad\qquad + \frac{1}{t} \int_E \abs{u(x)}^p (1 - T_t \ind(x)) m(dx) \biggr) \\
 & \qquad = \Eal_p(u) + \Eaj_p(u) + \Eak_p(u) ,
\end{align*}
and that the right-hand side is finite.

By~\eqref{eq:killing:approx} with $u$ and $v$ replaced by $u\pow{p/2}$, a finite limit
\begin{align*}
 \lim_{t \to 0^+} \frac{1}{t} \int_E \abs{u(x)}^p (1 - T_t \ind(x)) m(dx) & = \int_E \abs{u(x)}^p k(dx) = \Eak_p(u)
\end{align*}
exists. Therefore, it is enough to show that
\begin{align}
\label{eq:claim}
 \lim_{t \to 0^+} \frac{1}{2 t} \iint\limits_{E \times E} (u(y) - u(x)) (u\pow{p - 1}(y) - u\pow{p - 1}(x)) T_t(dx, dy) & = \Eal_p(u) + \Eaj_p(u) .
\end{align}
This is the most technical part of the proof of Theorem~\ref{thm:main}, and we divide the argument into six steps.

\emph{Step 1.} Let $\alpha = \frac{2}{p}$ and $v = u\pow{p/2}$, so that $\alpha \in (0, 2)$, $v \in \D(\E)$ and $v$ is quasi-continuous. In terms of $\alpha$ and $v$, we have
\begin{align*}
 \Eal_p(u) & = \frac{4 (p - 1)}{p^2} \, \El(u\pow{p/2}) = \alpha (2 - \alpha) \El(v)
\end{align*}
and
\begin{align*}
 \Eaj_p(u) & = \frac{1}{2} \iint\limits_{(E \times E) \setminus \Delta} (u(y) - u(x)) (u\pow{p - 1}(y) - u\pow{p - 1}(x)) J(dx, dy) \\
 & = \frac{1}{2} \iint\limits_{(E \times E) \setminus \Delta} (v\pow{\alpha}(y) - v\pow{\alpha}(x)) (v\pow{2 - \alpha}(y) - v\pow{2 - \alpha}(x)) J(dx, dy) .
\end{align*}
Thus, our goal~\eqref{eq:claim} reads
\begin{align}
\label{eq:claim:v}
\begin{aligned}
 & \lim_{t \to 0^+} \frac{1}{2 t} \iint\limits_{E \times E} (v\pow{\alpha}(y) - v\pow{\alpha}(x)) (v\pow{2 - \alpha}(y) - v\pow{2 - \alpha}(x)) T_t(dx, dy) \\
 & \quad = \alpha (2 - \alpha) \El(v) + \frac{1}{2} \iint\limits_{(E \times E) \setminus \Delta} (v\pow{\alpha}(y) - v\pow{\alpha}(x)) (v\pow{2 - \alpha}(y) - v\pow{2 - \alpha}(x)) J(dx, dy) .
\end{aligned}
\end{align}
We will apply \eqref{eq:main} for $s = n^2 v(x)$ and $t = n^2 v(y)$, where $n = 2, 3, \ldots$\, Using the functions $\ph_\alpha$, $\ph_{\alpha, n}$ and $\psi_n$ introduced in Lemma~\ref{lem:main}, let us denote
\begin{align*}
 v_\alpha(x) & = n^{-2 \alpha} \ph_\alpha(n^2 v(x)) = v\pow{\alpha}(x) , \\
 v_{\alpha, n}(x) & = n^{-2 \alpha} \ph_{\alpha, n}(n^2 v(x)) , \\
 w_n(x) & = n^{-2} \psi_n(n^2 v(x)) ,
\end{align*}
and furthermore
\begin{align*}
 \Phi_\alpha(x, y) & = (v_\alpha(y) - v_\alpha(x)) (v_{2 - \alpha}(y) - v_{2 - \alpha}(x)) , \\
 \Phi_{\alpha, n}(x, y) & = (v_{\alpha, n}(y) - v_{\alpha, n}(x)) (v_{2 - \alpha, n}(y) - v_{2 - \alpha, n}(x)) .
\end{align*}
Finally, let $c = 180$ and $\eps_{\alpha, n} = 8 n^{-\min\{\alpha, 2 - \alpha\}}$. With this notation, Lemma~\ref{lem:main} asserts that
\begin{align}
\label{eq:main:v}
 \abs{\Phi_\alpha(x, y) - \Phi_{\alpha, n}(x, y)} & \le \eps_{\alpha, n} (v(y) - v(x))^2 + c (w_n(y) - w_n(x))^2 ,
\end{align}
while our goal~\eqref{eq:claim:v} takes form
\begin{align}
\label{eq:main:goal}
 \lim_{t \to 0^+} \frac{1}{2 t} \iint\limits_{E \times E} \Phi_\alpha(x, y) T_t(dx, dy) & = \alpha (2 - \alpha) \El(v) + \frac{1}{2} \iint\limits_{(E \times E) \setminus \Delta} \Phi_\alpha(x, y) J(dx, dy) .
\end{align}
In order to prove this equality, we approximate $\Phi_\alpha$ by $\Phi_{\alpha, n}$ and estimate the error.

\emph{Step 2.} Since $v \in \D(\E)$ and $v$ is quasi-continuous, we have, by~\eqref{eq:jump:approx},
\begin{align}
\label{eq:main:d}
 \lim_{t \to 0^+} \frac{1}{2 t} \iint\limits_{E \times E} (v(y) - v(x))^2 T_t(dx, dy) & = \El(v) + \frac{1}{2} \iint\limits_{E \times E} (v(y) - v(x))^2 J(dx, dy) .
\end{align}

\emph{Step 3.} We have $v \in \D(\E)$, and $\ph_{\alpha, n}$ is a Lipschitz function satisfying $\ph_{\alpha, n}(0) = 0$. Thus, the function $v_{\alpha, n}(x) = n^{-2 \alpha} \ph_{\alpha, n}(n^2 v(x))$ is in $\D(\E)$. Similarly, $v_{2 - \alpha, n} \in \D(\E)$. Additionally, $v$, $v_{\alpha, n}$, and $v_{2 - \alpha, n}$ are quasi-continuous. By~\eqref{eq:jump:approx},
\begin{align*}
 & \lim_{t \to 0^+} \frac{1}{2 t} \iint\limits_{E \times E} (v_{\alpha, n}(y) - v_{\alpha, n}(x)) (v_{2 - \alpha, n}(y) - v_{2 - \alpha, n}(x)) T_t(dx, dy) \\
 & \qquad = \El(v_{\alpha, n}, v_{2 - \alpha, n}) + \frac{1}{2} \iint\limits_{(E \times E) \setminus \Delta} (v_{\alpha, n}(y) - v_{\alpha, n}(x)) (v_{2 - \alpha, n}(y) - v_{2 - \alpha, n}(x)) J(dx, dy).
\end{align*}
In terms of $\Phi_{\alpha, n}$, this equality reads
\begin{align}
\label{eq:main:a}
\begin{aligned}
 & \lim_{t \to 0^+} \frac{1}{2 t} \iint\limits_{E \times E} \Phi_{\alpha, n}(x, y) T_t(dx, dy) \\
 & \qquad = \El(v_{\alpha, n}, v_{2 - \alpha, n}) + \frac{1}{2} \iint\limits_{(E \times E) \setminus \Delta} \Phi_{\alpha, n}(x, y) J(dx, dy) .
\end{aligned}
\end{align}

\emph{Step 4.} Since $v \in \D(\E)$ and $\psi_n$ are Lipschitz functions, as in the previous step we find that the functions $w_n(x) = n^{-2} \psi_n(n^2 v(x))$ are in $\D(\E)$. Since $v$ and $w_n$ are quasi-continuous, from~\eqref{eq:jump:approx} we obtain
\begin{align}
\label{eq:main:b}
\begin{aligned}
 & \lim_{t \to 0^+} \frac{1}{2 t} \iint\limits_{E \times E} (w_n(y) - w_n(x))^2 T_t(dx, dy) \\
 & \qquad = \El(w_n) + \frac{1}{2} \iint\limits_{(E \times E) \setminus \Delta} (w_n(y) - w_n(x))^2 J(dx, dy) .
\end{aligned}
\end{align}
Furthermore, the functions $s \mapsto n^{-2} \psi_n(n^2 s)$ have Lipschitz constant $1$ and they converge pointwise to zero as $n \to \infty$. Thus, $w_n(x) = n^{-2} \psi_n(n^2 v(x))$ converge pointwise to zero as $n \to \infty$, and additionally $\abs{w_n(y) - w_n(x)} \le \abs{v(y) - v(x)}$. Since $(v(y) - v(x))^2$ is integrable with respect to $J(dx, dy)$, we may use the dominated convergence theorem to find that
\begin{align}
\label{eq:main:c}
 \lim_{n \to \infty} \iint\limits_{(E \times E) \setminus \Delta} (w_n(y) - w_n(x))^2 J(dx, dy) & = 0 .
\end{align}

\emph{Step 5.} The above results are sufficient to handle the purely nonlocal part, and we now turn to the properties of the strongly local part. Recall that $w_n(x) = n^{-2} \psi_n(n^2 v(x))$, and
\begin{align*}
 \psi_n'(s) & =
 \begin{cases}
  1 & \text{when $\abs{s} < n$,} \\
  0 & \text{when $n < \abs{s} < n^3$,} \\
  1 & \text{when $\abs{s} > n^3$.}
 \end{cases}
\end{align*}
By~\eqref{eq:form:local}, we have
\begin{align*}
 \El(w_n) & = \int_E (\psi_n'(n^2 v(x)))^2 \mul_v(dx) \\
 & = \int_E \ind_{(0, 1/n) \cup (n, \infty)}(\abs{v(x)}) \mul_v(dx) .
\end{align*}
Hence, by the dominated convergence theorem,
\begin{align}
\label{eq:main:e}
 \lim_{n \to \infty} \El(w_n) & = 0 .
\end{align}
Similarly, we have $v_{\alpha, n}(x) = n^{-2} \ph_{\alpha, n}(n^2 v(x))$ and $v_{2 - \alpha, n}(x) = n^{-2} \ph_{2 - \alpha, n}(n^2 v(x))$. Thus, again by~\eqref{eq:form:local},
\begin{align*}
 \El(v_{\alpha, n}, v_{2 - \alpha, n}) & = \int_E \ph_{\alpha, n}'(n^2 v(x)) \ph_{2 - \alpha, n}'(n^2 v(x)) \mul_v(dx) .
\end{align*}
However,
\begin{align*}
 \ph_{\alpha, n}'(s) & =
 \begin{cases}
  1 & \text{when $\abs{s} < 1$,} \\
  \alpha \abs{s}^{\alpha - 1} & \text{when $1 < \abs{s} < n^4$,} \\
  0 & \text{when $\abs{s} > n^4$.}
 \end{cases}
\end{align*}
Thus,
\begin{align*}
 \ph_{\alpha, n}'(s) \ph_{2 - \alpha, n}'(s) & =
 \begin{cases}
  1 & \text{when $\abs{s} < 1$,} \\
  \alpha (2 - \alpha) & \text{ when $1 < \abs{s} < n^4$,} \\
  0 & \text{when $\abs{s} > n^4$,}
 \end{cases}
\end{align*}
and it follows that
\begin{align*}
 \El(v_{\alpha, n}, v_{2 - \alpha, n}) & = \int_E \Bigl(\ind_{(0, 1/n^2)}(\abs{v(x)}) + \alpha (2 - \alpha) \ind_{(1/n^2, n^2)}(\abs{v(x)})\Bigr) \mul_v(dx) .
\end{align*}
Using the dominated convergence theorem, we find that
\begin{align}
\label{eq:main:f}
 \lim_{n \to \infty} \El(v_{\alpha, n}, v_{2 - \alpha, n}) & = \int_E \alpha (2 - \alpha) \mul_v(dx) = \alpha (2 - \alpha) \El(v) .
\end{align}

\emph{Step 6.} In order to prove our goal~\eqref{eq:main:goal}, we denote
\begin{align*}
 L & = \limsup_{t \to 0^+} \biggl| \frac{1}{2 t} \iint\limits_{E \times E} \Phi_\alpha(x, y) T_t(dx, dy) - \alpha (2 - \alpha) \El(v) - \frac{1}{2}\iint\limits_{(E \times E) \setminus \Delta} \Phi_\alpha(x, y) J(dx, dy) \biggr| .
\end{align*}
We claim that $L = 0$. Clearly, for $t > 0$ and $n = 2, 3, \ldots$ we have
\begin{align*}
 & \biggl| \frac{1}{2 t} \iint\limits_{E \times E} \Phi_\alpha(x, y) T_t(dx, dy) - \alpha (2 - \alpha) \El(v) - \frac{1}{2} \iint\limits_{(E \times E) \setminus \Delta} \Phi_\alpha(x, y) J(dx, dy) \biggr| \\
 & \qquad \le \biggl| \frac{1}{2 t} \iint\limits_{E \times E} \Phi_{\alpha, n}(x, y) T_t(dx, dy) - \El(v_{\alpha, n}, v_{2 - \alpha, n}) - \frac{1}{2} \iint\limits_{(E \times E) \setminus \Delta} \Phi_{\alpha, n}(x, y) J(dx, dy) \biggr| \\
 & \qquad\qquad + \frac{1}{2 t} \iint\limits_{E \times E} \abs{\Phi_\alpha(x, y) - \Phi_{\alpha, n}(x, y)} T_t(dx, dy) \\
 & \qquad\qquad\qquad + \abs{\El(v_{\alpha, n}, v_{2 - \alpha, n}) - \alpha (2 - \alpha) \El(v)} \\
 & \qquad\qquad\qquad\qquad + \frac{1}{2} \iint\limits_{(E \times E) \setminus \Delta} \abs{\Phi_{\alpha, n}(x, y) - \Phi_\alpha(x, y)} J(dx, dy) .
\end{align*}
By~\eqref{eq:main:a}, the first term on the right-hand side converges to zero as $t \to 0^+$ for every $n = 2, 3, \ldots$\, Therefore,
\begin{align*}
 L & \le \limsup_{t \to 0^+} \frac{1}{2 t} \iint\limits_{E \times E} \abs{\Phi_\alpha(x, y) - \Phi_{\alpha, n}(x, y)} T_t(dx, dy) \\
 & \qquad + \abs{\El(v_{\alpha, n}, v_{2 - \alpha, n}) - \alpha (2 - \alpha) \El(v)} \\
 & \qquad\qquad + \frac{1}{2} \iint\limits_{(E \times E) \setminus \Delta} \abs{\Phi_{\alpha, n}(x, y) - \Phi_\alpha(x, y)} J(dx, dy) .
\end{align*}
We apply~\eqref{eq:main:v} to each of the integrands on the right-hand side to find that
\begin{align*}
 L & \le \limsup_{t \to 0^+} \biggl(\frac{\eps_{\alpha, n}}{2 t} \iint\limits_{E \times E} (v(y) - v(x))^2 T_t(dx, dy) + \frac{c}{2 t} \iint\limits_{E \times E} (w_n(y) - w_n(x))^2 T_t(dx, dy) \biggr) \\
 & \qquad + \abs{\El(v_{\alpha, n}, v_{2 - \alpha, n}) - \alpha (2 - \alpha) \El(v)} \\
 & \qquad\qquad + \frac{\eps_{\alpha, n}}{2} \iint\limits_{(E \times E) \setminus \Delta} (v(y) - v(x))^2 J(dx, dy) + \frac{c}{2} \iint\limits_{(E \times E) \setminus \Delta} (w_n(y) - w_n(x))^2 J(dx, dy) .
\end{align*}
Next, we use~\eqref{eq:main:d} and~\eqref{eq:main:b} to transform the first two terms on the right-hand side (and combine them with the last two terms):
\begin{align*}
 L & \le \eps_{\alpha, n} \El(v) + \eps_{\alpha, n} \iint\limits_{(E \times E) \setminus \Delta} (v(y) - v(x))^2 J(dx, dy) \\
 & \qquad + c \El(w_n) + c \iint\limits_{(E \times E) \setminus \Delta} (w_n(y) - w_n(x))^2 J(dx, dy) \\
 & \qquad\qquad + \abs{\El(v_{\alpha, n}, v_{2 - \alpha, n}) - \alpha (2 - \alpha) \El(v)} .
\end{align*}
The above estimate holds for every $n = 2, 3, \ldots$ and the left-hand side does not depend on $n$. The first two terms on the right-hand side converge to zero as $n \to \infty$ by the definition of $\eps_{\alpha, n}$. The third term tends to zero by~\eqref{eq:main:e}, the fourth one by~\eqref{eq:main:c}, and the fifth one by~\eqref{eq:main:f}. Thus, the left-hand side $L$ is necessarily zero, as claimed. We have thus proved our goal~\eqref{eq:main:goal}, or, equivalently, that $u \in \D(\E_p)$ and that $\E_p(u)$ is given by the analogue of the Beurling--Deny formula~\eqref{eq:claim}.
\end{proof}

%
%                            ---------- o ----------
%

\section{Example: Euclidean spaces}
\label{sec:example}

Suppose that $E$ is a domain in a Euclidean space $\R^n$, and that $\E$ is a regular Dirichlet form such that all smooth, compactly supported functions on $E$ belong to $\D(\E)$. In this case, the strongly local part admits a more explicit description: for all smooth, compactly supported functions $u, v$ on $E$ we have
\begin{align}
\notag
 \E(u, v) & = \int_E \sum_{i, j = 1}^n \frac{\partial u}{\partial x_i}(x) \frac{\partial v}{\partial x_j}(x) \nu_{i, j}(dx) \\
\label{eq:ex:form}
 & \qquad + \frac{1}{2} \iint\limits_{(E \times E) \setminus \Delta} (u(y) - u(x)) (v(y) - v(x)) J(dx, dy) \\
\notag
 & \qquad\qquad + \int_E u(x) v(x) k(dx)
\end{align}
for some locally finite measures $\nu_{i, j}$ on $E$ (see Theorem~3.2.3 in~\cite{fot}). In this case, by Theorem~\ref{thm:main}, the Sobolev--Bregman form is given explicitly by 
\begin{align}
\notag
 \E_p(u) & = (p - 1) \int_E \sum_{i, j = 1}^n \abs{u(x)}^{p - 2} \frac{\partial u}{\partial x_i}(x) \frac{\partial u}{\partial x_j}(x) \nu_{i, j}(dx) \\
\label{eq:ex:pform}
 & \qquad + \frac{1}{2} \iint\limits_{(E \times E) \setminus \Delta} (u(y) - u(x)) (u\pow{p - 1}(y) - u\pow{p - 1}(x)) J(dx, dy) \\
\notag
 & \qquad\qquad + \int_E \abs{u(x)}^p k(dx)
\end{align}
for every smooth, compactly supported function $u$ on $E$. Furthermore, whenever~\eqref{eq:ex:form} holds for a broader class $\D$ of admissible functions $u, v$ (e.g.\@ an appropriate Sobolev space $W^{1,2}_0(E)$ or $W^{1,2}(E)$ when the strongly local part of $\E$ corresponds to a uniformly elliptic second order operator), then~\eqref{eq:ex:pform} holds true for every $u$ such that $u\pow{p/2} \in \D$.

%
%                            ---------- o ----------
%

\subsection*{Acknowledgements}

We are grateful to Krzysztof Bogdan for inspiration, discussions, and numerous comments on the preliminary version of this paper. We also thank René L.~Schilling for advice on domains of Dirichlet forms. We thank the anonymous referee for thoughtful suggestions, which greatly improved our manuscript.

%
%                            ---------- o ----------
%

%
%                            ---------- o ----------
%

\end{document}